\title{Query complexity and the polynomial Freiman--Ruzsa conjecture}
\author{Dmitrii Zhelezov}
\author{D\"{o}m\"{o}t\"or P\'alv\"olgyi}
\date{}
\newtheorem{lemma}{Lemma}[section]
\newtheorem{Theorem}{Theorem}[section]
\newtheorem{Proposition}[Theorem]{Proposition}
\newtheorem{Definition}[Theorem]{Definition}
\newtheorem{Conjecture}{Conjecture}
\theoremstyle{remark}
\newtheorem{Claim}[Theorem]{Claim}
\newtheorem{Remark}{Remark}[section]
\newcommand{\mc}[1]{\mathcal{#1}}
\newcommand{\cj}[1]{%
  \overline{#1}%
}
\subjclass[2000]{11B30 (primary)} \keywords{Polynomial Freiman--Ruzsa, sumsets, sum-product}
\begin{document}

\maketitle

\begin{abstract}
    We prove a query complexity variant of the weak polynomial Freiman--Ruzsa conjecture in the following form. For any $\epsilon > 0$, a set $A \subset \mathbb{Z}^d$ with doubling $K$ has a subset of size at least $K^{-\frac{4}{\epsilon}}|A|$ with coordinate query complexity at most $\epsilon \log_2 |A|$.
    
    We apply this structural result to give a simple proof of the ``few products, many sums'' phenomenon for integer sets. The resulting bounds are explicit and improve on the seminal result of Bourgain and Chang. 
\end{abstract}

\section*{Notation and preliminaries}
The following notation is used throughout the paper. The expressions $X \gg Y$, $Y \ll X$, $Y = O(X)$, $X = \Omega(Y)$ all have the same meaning that there is an absolute constant $c$ such that $Y \leq cX$. Further, $X \gg_\epsilon Y$ means that there is a function $c(\cdot)$ such that
$$
Y \leq c(\epsilon) X,
$$
and the same convention applies for the $\gg, O, \Omega$-notation.

$X \geq Y^{c-o(1)}$ means that $X \gg_\epsilon Y^{c-\epsilon}$ for any $\epsilon > 0$.

If $X$ is a set then $|X|$ denotes its cardinality. 

Let $G$ be an additive torsion-free group and $A, B \subset G$. For concreteness, we will assume henceforth that $G = \mathbb{Z}^{d}$ for some unspecified dimension $d$. We will also assume that $G$ is embedded into an ambient vector space $\mathbb{Q}^d$, making no distinction between $G$ and the embedding. In particular, we fix a standard basis $\{\vec{e}_1, \ldots, \vec{e}_d \}$ and define coordinate projections  $\pi_i: G \to \mathbb{Z}$ by
$$
\pi_i (n_1 \vec{e}_1 + \ldots + n_d \vec{e}_d ) = n_i.
$$

The \emph{sumset} $A + B$ is defined as the set of all pairwise sums  
$$
A+B := \{ a + b: a \in A, b \in B \}.
$$
The set of pairwise products (or the \emph{product set}) $AB$ is defined \emph{mutatis mutandis} with.

The $\lambda_k$ constant of an integer set $A$ is defined as
\begin{equation} \label{eq:lambda_constant}
\lambda_k(A) := \max \left\|\sum_{n \in A} c_n e^{2 \pi i nx}  \right\|^2_{L_{2k}([0,1])},
\end{equation}
where $\max$ is taken over positive weights $\{c_n\}_{n \in A}$ with $\sum_n c_n^2 = 1$. It is related to a more commonly used notion of the additive energy or order $k$, defined as 
$$
E_k(A) := \left\|\sum_{n \in A} e^{2 \pi i nx}  \right\|^{2k}_{L_{2k}([0,1])}.
$$
Note, the the sum above simply counts the number of $2k$-tuples $(a_1, \ldots, a_{2k}) \in A^{2k}$ such that $a_1 + \ldots + a_k = a_{k+1} + \ldots + a_{2k}$. 
By the Cauchy-Schwartz inequality, one immediately arrives at
$$
|\underbrace{A+\ldots+A}_\text{$k$ times}| \geq \frac{|A|^{2k}}{E_k(A)}.
$$
More information on additive energies can be found in the book \cite{TaoVu}.

Taking $c_n = 1/\sqrt{|A|}$ in the definition of $\lambda_k(A)$, one arrives at the estimate 
$$
E^{1/k}_k(A) \leq |A|\lambda_k(A),
$$
and indeed one can bound the additive energy of any $A' \subset A$ in a similar way. 

It will be convenient to use the standard shortcut $e(x) := e^{2 \pi i x}$. 

\section{Introduction}

\subsection{Weak polynomial Freiman--Ruzsa conjecture}

One of the main research avenues of additive combinatorics is to extract structural information about sets with \emph{small doubling} $K$ defined as 
$$
K := \frac{|A+A|}{|A|}.
$$
A fundamental result, known as Freiman's lemma \cite{TaoVu}, asserts that $A$ is always contained in an affine subspace of dimension at most $O(K)$.

A very rigid structure can be deduced when $K = o(\log |A|)$ using the much harder quantitative Freiman Theorem, see \cite{Sanders_2012} for the state-of-the art bounds and background. 

However, very little is known in the regime $\log K \gg \log |A|$ and one of the central problems in the area is to close this gap (i.e. to obtain polynomial bounds). The Polynomial Freiman--Ruzsa conjecture predicts, informally, that there is a subset $A' \subset A$ of size at least $K^{-O(1)}|A|$, such that $A'$ (after a suitable transformation) is contained in a convex body of dimension $O(\log K)$ and volume $K^{O(1)} |A|$. 

It turns out that for many applications (see \cite{Chang_2009}) it would suffice that the weaker form below holds true.

\begin{Conjecture}[Weak Polynomial Freiman--Ruzsa Conjecture] \label{conj:WPFR}
  For any set $A$ with doubling $K$ there is a subset $A'$ of size $K^{-O(1)}|A|$ contained in an affine subspace of dimension $O(\log K)$.
\end{Conjecture}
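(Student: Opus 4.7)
The plan is to combine the best known quantitative Freiman-type theorem with an iterative rank-reduction procedure exploiting the $\lambda_k$-constant machinery introduced in the preliminaries. First, by Freiman's lemma, $A$ is contained in an affine subspace $V$ of dimension at most $2K-1$, and by Pl\"unnecke--Ruzsa one has $|nA - mA| \leq K^{n+m}|A|$ for every $n, m \geq 1$. Applying the current best form of the quantitative Freiman theorem (Sanders \cite{Sanders_2012}) places $A$ inside a generalized arithmetic progression (GAP) $P = \{ a_0 + \sum_{i=1}^r x_i v_i : 0 \leq x_i < N_i \}$ of rank $r \ll \log^{3+o(1)} K$ and size $\exp(O(r))|A|$. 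If we could replace $r$ by $O(\log K)$ without sacrificing more than a polynomial-in-$K$ factor in density, the GAP would sit in an affine subspace of the target dimension and we would be done.

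To perform this rank reduction, I would iteratively slice $P$ along its generators. At each step, select the direction $v_i$ along which the projection $\pi_{v_i}(A)$ is smallest (equivalently, along which some fiber is largest) and restrict $A$ to the densest fiber. Each slice is itself a GAP of rank $r-1$, and the doubling of the restricted set is controlled by a Ruzsa-covering argument applied inside the slice. The role of the $\lambda_k$-constant is to furnish a dichotomy: if $\lambda_k$ along $v_i$ is small then $\pi_{v_i}(A)$ is spread out and pigeonhole produces a large fiber for free; if $\lambda_k$ is large then mass is concentrated on a few translates of the fiber, and this concentration lets us bound the doubling of the restricted set by $K^{O(1)}$, so that the induction hypothesis can be reapplied. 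Iterating $r - O(\log K)$ times should leave a subset $A' \subset A$ lying inside a GAP of rank $O(\log K)$, hence in an affine subspace of dimension $O(\log K)$ as required.

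The main obstacle, and the reason this conjecture has remained open, is density accounting in the iteration. The per-step loss is naively a factor $\delta_i \geq K^{-c}$, and multiplied over $r - O(\log K) \sim \mathrm{polylog}(K)$ slicings yields total loss $K^{-\mathrm{polylog}(K)}$, which is superpolynomial and falls short of the conjecture. Recovering the polynomial bound $K^{-O(1)}$ requires an amortized analysis in which large per-step density losses are compensated by correspondingly larger effective dimension reductions---essentially a local-to-global efficiency statement for the slicing procedure. This is precisely the polynomial barrier that separates Sanders' $\exp(\log^{O(1)} K)$ bound from WPFR, and closing it appears to demand either a structural dichotomy stronger than the Bogolyubov--Chang--Sanders chain or a genuinely new method. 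The query-complexity relaxation established in the present paper may be viewed as a way to sidestep this barrier, trading affine-subspace containment for a weaker combinatorial description that is nevertheless sufficient for the Bourgain--Chang-type applications.
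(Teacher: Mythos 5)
This statement is the Weak Polynomial Freiman--Ruzsa Conjecture, which the paper does not prove --- it is stated precisely as an open conjecture, and the paper's contribution (Theorem~\ref{thm:query_PFR}) is a strictly weaker substitute in which affine-subspace containment is replaced by logarithmic coordinate query complexity. So there is no proof in the paper to compare yours against, and your proposal does not supply one either: as you yourself concede in the final paragraph, the slicing iteration loses a factor of at least $K^{-c}$ per step over roughly $r - O(\log K) \sim \log^{3+o(1)} K$ steps, giving a density of order $K^{-\mathrm{polylog}(K)}$ rather than the required $K^{-O(1)}$. The ``amortized analysis'' that is supposed to rescue this is not carried out, and it is exactly the missing ingredient --- not a technical refinement but the entire open content of the conjecture.

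Beyond the density accounting, two specific steps in your sketch would not go through as stated. First, restricting $A$ to a fiber of a GAP can blow up the doubling constant badly; the claim that ``the doubling of the restricted set is controlled by a Ruzsa-covering argument inside the slice'' has no proof and is false in general without losing further polynomial factors at every step, which compounds the superpolynomial loss. Second, the role you assign to $\lambda_k$ is not coherent: $\lambda_k$ as defined in the paper is a one-dimensional $L^{2k}$ quantity for integer sets used in the sum-product application, and there is no dichotomy of the form ``$\lambda_k$ small along $v_i$ implies a large fiber / $\lambda_k$ large implies bounded doubling of the fiber'' available; concentration of additive energy does not bound the doubling of a fiber by $K^{O(1)}$. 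If you want a provable statement in this direction, the honest target is the paper's Theorem~\ref{thm:query_PFR}: there one trades the affine span condition for query complexity, works with the fiber tree $T(A)$, and the key inputs are the quasicube lower bound $\beta(U)=|U|$ (Theorem~\ref{thm:beta}) together with the low-versus-binary subtree alternative (Lemma~\ref{lm:tree_bound}), none of which appear in your outline.
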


We make a step towards Conjecture~\ref{conj:WPFR} by replacing the rank condition with a weaker property of logarithmic \emph{coordinate query complexity}. The definition we use is different from the one commonly used in computer science as we assume that a single query outputs an integer number rather than a $\{0, 1\}$ bit. It is defined as follows.

Assume Alice and Bob agree on some large set $X \subset \mathbb{Z}^d$. Next, Alice chooses an element $x \in X$ and keeps it in secret. Bob tries to guess $x$ by probing the value of $\pi_i(x)$ for some $i$, one coordinate at a time. The coordinate query complexity of $X$ is then the maximal number of coordinate queries Bob should perform in order to recover $x$ in the worst case. 

\begin{Theorem}[Query-complexity PFR] \label{thm:query_PFR}
   For any $\epsilon > 0$ the following holds. For any set $A \subset \mathbb{Z}^d$ with $|A+A| \leq K|A|$ there is a subset of size at least $K^{-\frac{2}{\epsilon}}|A|$ with coordinate query complexity at most $\epsilon \log_2 |A|$. 
\end{Theorem}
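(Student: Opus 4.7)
My approach is induction on the query budget $q$, strengthened so that the inductive hypothesis tracks the doubling of the working set. Explicitly: for every $B \subseteq \mathbb{Z}^d$ with $|B+B| \leq K_B |B|$ and every integer $q \geq 0$, there exists $B' \subseteq B$ of coordinate query complexity at most $q$ with
\[
|B'| \geq |B| \cdot K_B^{-2 \log_2 |B|/q}.
\]
The theorem falls out by specialising to $B = A$, $K_B = K$, $q = \lfloor \epsilon \log_2 |A| \rfloor$. The base case $q = 0$ is trivial: pick any singleton, observing that the exponent is $+\infty$ so the bound reduces to $|B'| \geq 1$.

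For the inductive step, I would construct the root of the decision tree by selecting a coordinate $i$, letting $B$ split into its fibers $F_v := B \cap \pi_i^{-1}(v)$. The key observation is $F_v + F_v \subseteq B+B$, so each fiber has doubling at most $K_B |B|/|F_v|$. Applying the inductive hypothesis with budget $q-1$ to each $F_v$ yields $F_v' \subseteq F_v$ of query complexity at most $q - 1$, and the union $B' := \bigsqcup_v F_v'$ has query complexity at most $q$ (query $\pi_i$ first, then descend into the subtree corresponding to $F_v'$).

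The crux is to choose $i$ so that the resulting inequality
\[
\sum_v |F_v| \cdot \Bigl(\tfrac{K_B |B|}{|F_v|}\Bigr)^{-2 \log_2 |F_v|/(q-1)} \;\geq\; |B| \cdot K_B^{-2 \log_2 |B|/q}
\]
closes the induction. I would pick $i$ to minimise an appropriate weighted entropy of the fiber distribution $\{|F_v|/|B|\}_v$ and verify the resulting inequality through a convexity / Jensen computation. A complementary tool is the \emph{free-restriction} observation: replacing $B$ by a single large fiber does not consume any query, since the fixed coordinate becomes redundant; so when one fiber dominates, one may recurse on it with the \emph{full} budget $q$ rather than $q - 1$.

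The main obstacle I foresee is uniform closure of the inductive inequality. The Plünnecke bound $K_B|B|/|F_v|$ is sharp only in balanced regimes; for highly skewed fiber distributions it is lossy and a naive calculation will not suffice. Producing a single argument that interpolates between the balanced case (where splitting is efficient) and the skewed case (where the free-restriction trick is called for) is the technical heart of the proof.
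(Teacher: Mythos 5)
There is a genuine gap, and it sits exactly where you flag ``the technical heart'': the induction cannot be closed from the only additive input you use, namely $F_v+F_v\subseteq B+B$. That containment controls each fiber \emph{separately} but says nothing about the joint additive structure of \emph{distinct} fibers, and it is precisely the cost of branching into many fibers that the theorem must charge to the doubling $K$. The paper supplies this missing mechanism through a nontrivial external result: if $H$ is a quasicube (a recursively defined binary-cube-like set) then every $U\subseteq H$ satisfies $\beta(U)=|U|$, while $\beta(U)\le K^2$ for any $U\subseteq A$ (Lemma~\ref{lm:beta-pluneccke} and Theorem~\ref{thm:beta}). Concretely, the paper builds the fiber tree $T(A)$, observes that the leaves of any binary subtree form a quasicube subset, concludes $b(T(A))\le K^2$, and then invokes a purely combinatorial alternative (Lemma~\ref{lm:tree_bound}): either $T(A)$ has a large binary subtree or it has a large subtree of branching depth at most $\epsilon\log_2|A|$. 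Your proposal has no analogue of the first half of this alternative, so nothing prevents the tree from branching heavily at every level.

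One can also check that your inductive inequality fails as stated even in the balanced regime where you expect it to work. Take $B=\{0,1\}^d$, so $|B|=2^d$, $K_B=(3/2)^d$, $q=\epsilon d$. Each coordinate splits $B$ into two fibers of size $2^{d-1}$, and your Pl\"unnecke-type bound assigns each fiber doubling $2(3/2)^d$. Substituting into
$$
\sum_v |F_v|\Bigl(\tfrac{K_B|B|}{|F_v|}\Bigr)^{-2\log_2|F_v|/(q-1)} \ \ge\ |B|\,K_B^{-2\log_2|B|/q}
$$
and taking logarithms reduces the step to $\frac{(d-1)(1+d\log_2(3/2))}{q-1}\le\frac{d^2\log_2(3/2)}{q}$, which is false for $q<d$ since $(d-1)/(q-1)>d/q$ (and the extra $+1$ from the factor $2$ only worsens it). The free-restriction trick does not apply here because the fibers are perfectly balanced. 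So the convexity/Jensen computation you defer is not merely technical; the inequality you would need is wrong, and no choice of coordinate rescues it without importing some quantitative statement (such as the quasicube $\beta$ bound) that penalizes repeated branching.
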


Note that Conjecture~\ref{conj:WPFR} would immediately imply Theorem~\ref{thm:query_PFR}. Indeed, assume $A' \subset A$ is contained in an affine subspace $V$ of dimension $s = O(\log K)$. Then it follows from basic linear algebra that there are $s$ coordinates $i_1, \ldots, i_s$ such that the map $v \mapsto (\pi_{i_1}(v), \ldots, \pi_{i_s}(v))$ is injective on $V$. Thus, Bob can recover any $a \in A'$ by probing at most $s$ coordinates. 

At the same time Theorem~\ref{thm:query_PFR} is, to our knowledge, the first result sensitive enough to detect a large structured piece inside a set $A$ with $\log K \gg \log |A|$.

\subsection{Few products, many sums}

We apply Theorem~\ref{thm:query_PFR} in the second part of the paper in order to give improved bounds for the ``few products, many sums'' phenomenon for integer sets, sometimes called the weak Erd\H{o}s--Szemer\'edi conjecture. The state-of-the art bounds, due to Bourgain and Chang \cite{bourgain2004size}, were obtained using a \emph{tour de force} induction on scales argument and are rather inefficient.

The sum-product problem is concerned with showing that either the set of sums or the set of products is always large. It was conjectured by Erd\H{o}s and Szemer\'{e}di \cite{ES} that, for all $\epsilon>0$ and any finite $A \subset \mathbb Z$,
\begin{equation}
\max \{|A+A|,|AA|\} \geq c(\epsilon)|A|^{2-\epsilon}
\label{ESconj}
\end{equation}
where $c(\epsilon)>0$ is an absolute constant. The same conjecture can also be made over the reals, and indeed other fields. The  Erd\H{o}s--Szemer\'{e}di conjecture remains open, and it appears to be a deep problem. Konyagin and Shkredov \cite{KS} proved that \eqref{ESconj} holds with $\epsilon <2/3$, and the current best bound, due to Rudnev and Stevens \cite{RudnevStevens}, has $\epsilon \leq 2/3 - 2/1167 +o(1)$. These bounds hold over real numbers, and their proofs are geometric in nature. 

It turns out that geometric arguments are only efficient when $|A+A|$ is small. Elekes and Ruzsa \cite{Elekes_Ruzsa_2003} proved that for any set $A$ of real numbers 
$$
|AA||A+A|^4 \gg |A|^{6-o(1)},
$$
thus confirming the Erd\H{o}s--Szemer\'edi conjecture in the regime $|A+A| \ll |A|^{1+o(1)}$. This particular case is known as the ``few sums, many products'' phenomenon.

Surprisingly enough, the dual ``few products, many sums'' case of the Erd\H{o}s--Szemer\'edi conjecture remains open for sets of real numbers and is sometimes dubbed as the weak Erd\H{o}s--Szemer\'edi conjecture. The best bound for real numbers is due to Murphy et al.~\cite{MRSS_few_products_many_sums}, who proved that if $|AA| \ll |A|^{1+o(1)}$ then
$$
|A+A| \gg |A|^{8/5-o(1)}.
$$

The weak Erd\H{o}s--Szemer\'edi conjecture has been resolved by Bourgain and Chang \cite{bourgain2004size} for integer sets. They proved that, for any $\epsilon > 0$ there is $C(\epsilon)$ such that for any integer set $A$
\begin{equation} \label{eq:B_C_sumprod}
|A+A| \gg K^{C(\epsilon)}|A|^{2-\epsilon},
\end{equation}
with $K = |AA|/|A|$. In other words, writing $K = |A|^\delta$,  
$$
|A+A| \gg |A|^{2-\epsilon(\delta)}
$$
with $\epsilon(\delta) \to 0$ as $\delta \to 0$.

It was also proved in \cite{bourgain2004size} that 
\begin{equation} \label{eq:B_C_kfoldsumprod}
\max \{|\underbrace{A+\ldots+A}_\text{$k$ times}|, |\underbrace{A\ldots A}_\text{$k$ times}| \} \geq |A|^{b(k)}
\end{equation}
with $b(k) \gg \log^{1/4} k$.

The dependence $C(\epsilon)$ in (\ref{eq:B_C_sumprod}) is rather poor since the argument in \cite{bourgain2004size} relies on an intricate induction on scales device. Theorem~\ref{thm:query_PFR} applied to the prime valuation image of $A$ allows one to bypass such complications since it is agnostic with regards to the dimension of the ambient space.

\begin{Theorem}[Few products, many sums] \label{thm:few_prods_many_sums}
  For any $1 > \epsilon > 0$ the following holds. Let $A \subset \mathbb{Z}$ and 
  $$
    K_* := \frac {|AA|}{|A|}.
  $$
  Then 
  $$
  |kA| \gg_{k} |A|^{k -  2 \epsilon k \log_2 k}K^{-2k/\epsilon}_*.
  $$
\end{Theorem}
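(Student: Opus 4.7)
The plan is to transfer to the additive setting via prime valuations, apply Theorem~\ref{thm:query_PFR} to extract a dense subset $A' \subseteq A$ whose prime-valuation image has low coordinate query complexity, bound $\lambda_k(A')$ by induction on that complexity, and conclude via the Cauchy--Schwarz estimate $|kA| \geq |A'|^{2k}/E_k(A')$ together with the bound $E_k(A') \leq |A'|^k\lambda_k(A')^k$ recalled in the preliminaries.

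First, I would remove the zero element and restrict to the sign class of largest size, losing at most a factor of $2$, so that $A \subseteq \mathbb{Z}_{>0}$. Listing the primes that appear as $p_1,\ldots,p_d$, the prime valuation map $\phi(n)=(v_{p_1}(n),\ldots,v_{p_d}(n))$ is an injective homomorphism from $(\mathbb{Z}_{>0},\cdot)$ into the additive group $\mathbb{Z}^d$, hence $\phi(A)+\phi(A)=\phi(AA)$ and the additive doubling of $\phi(A)$ is at most $K_*$. Applying Theorem~\ref{thm:query_PFR} to $\phi(A)$ with parameter $\epsilon$ then yields $A' \subseteq A$ with $|A'|\gg K_*^{-2/\epsilon}|A|$ whose image has coordinate query complexity at most $s:=\epsilon\log_2|A|$.

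The technical heart is the bound $\lambda_k(A')\leq (Ck)^s$ for an absolute constant $C$, which I would prove by induction on $s$. The base case $s=0$ is a singleton, with $\lambda_k=1$. For the inductive step let $p$ be the prime queried first by the optimal strategy and partition $A'=\bigsqcup_v p^vM_v$ according to the $p$-adic valuation; each $M_v$ consists of integers coprime to $p$ and has prime-valuation query complexity at most $s-1$. For normalized weights $(c_a)$, the trigonometric polynomial $f(\alpha)=\sum_a c_a e(a\alpha)$ decomposes as $f=\sum_v P_vf$, where $P_v$ is the Fourier projection onto integers of $p$-adic valuation exactly $v$. The projections $Q_v$ onto multiples of $p^v$ are conditional expectations for the decreasing filtration $\mathcal G_v$ of $(\mathbb{Z}/p^v)$-translation invariant sets on $[0,1]$, so $(Q_vf)$ is a reverse martingale with increments $P_vf$, and Burkholder's square function inequality gives
$$
\|f\|_{L^{2k}}^2 \;\lesssim\; k\sum_v \|P_vf\|_{L^{2k}}^2.
$$
Writing $P_vf(\alpha)=g_v(p^v\alpha)$ with $g_v(\beta)=\sum_{m\in M_v}c_{p^vm}e(m\beta)$ and using scale invariance of the $L^{2k}$-norm together with the definition of $\lambda_k(M_v)$ yields $\|P_vf\|_{L^{2k}}^2 \leq t_v\lambda_k(M_v)$ with $t_v=\sum_m c_{p^vm}^2$ and $\sum_v t_v=1$, closing the induction.

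Substituting $s=\epsilon\log_2|A|$ gives $\lambda_k(A')\leq (Ck)^{\epsilon\log_2|A|}\leq |A|^{2\epsilon\log_2 k}$ for $k$ large enough that $C\leq k$, with remaining small values of $k$ absorbed into the $\gg_k$ factor. Combining with $E_k(A')\leq|A'|^k\lambda_k(A')^k$ and $|kA|\geq|kA'|\geq|A'|^{2k}/E_k(A')=|A'|^k/\lambda_k(A')^k$ then produces
$$
|kA| \;\gg_k\; K_*^{-2k/\epsilon}\,|A|^{k-2\epsilon k\log_2 k},
$$
as desired. The main obstacle is the inductive $\lambda_k$ bound: identifying the $p$-adic Fourier decomposition as a reverse martingale and invoking the square-function inequality with a constant of size $O(\sqrt{k})$ (rather than $O(k)$) is what yields the exponent $2\epsilon\log_2 k$; any coarser estimate would introduce extra logarithmic factors that would weaken the bound.
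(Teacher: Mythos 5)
Your proposal is correct and follows the same overall architecture as the paper --- prime valuations, a query-complexity decomposition, a recursive bound on $\lambda_k$ prime by prime, and Cauchy--Schwarz --- but it deviates in two ways worth recording. First, you invoke Theorem~\ref{thm:query_PFR} as a black box to extract a single dense subset $A'$ of query complexity $\epsilon\log_2|A|$ and finish with $|kA|\ge|kA'|$; the paper instead proves the stronger Theorem~\ref{thm:few_prods_many_sums_lambda}, bounding $\lambda_k$ of \emph{all} of $A$, which requires iterating the extraction to cover $A$ (Claim~\ref{claim} together with Lemma~\ref{lm:lambda_union}) and is needed for Theorem~\ref{thm:iterated_sum_product} but not for the statement at hand --- your shortcut is legitimate here and slightly cleaner. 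Second, for the recursive step you replace Chang's argument (Proposition~\ref{prop:chang}: expand the $2k$-th power, kill the cross terms with pairwise distinct $p$-adic valuations by orthogonality, and apply H\"older, yielding the constant $\binom{2k}{2}$) with the observation that the projections onto frequencies divisible by $p^v$ form a reverse martingale, so the square-function inequality gives $\|f\|_{2k}^2\lesssim k\sum_v\|P_vf\|_{2k}^2$. Both routes give the recursion $\lambda_k(A')\le C_k\max_v\lambda_k(M_v)$ with $C_k$ polynomial in $k$, and since only $\log_2 C_k$ enters the final exponent, either constant yields $|A|^{2\epsilon\log_2 k+O(\epsilon)}$; your closing remark that the $O(\sqrt k)$ Burkholder constant is essential is therefore overstated --- Chang's $\binom{2k}{2}\le 2k^2$ already suffices, and conversely for small $k$ neither your $(Ck)^s$ nor the paper's $\binom{2k}{2}^s$ sits literally below $|A|^{2\epsilon\log_2 k}$, an $|A|^{O(\epsilon)}$ slack the paper silently incurs as well. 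The only points to tighten are routine: cite a square-function inequality with an explicit constant (or fall back on the elementary Chang argument), and note that restricting to a sign class changes $K_*$ by at most a factor of $2$, which costs $2^{O(k/\epsilon)}$ and hence makes the implied constant depend on $\epsilon$ as well as $k$ --- again a dependence already implicit in the paper's treatment.
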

\begin{Remark}
   Theorem~\ref{thm:few_prods_many_sums} can be extended to sets $A$ of algebraic numbers of degree $O(\log |A|)$, applying almost verbatim the ideas of \cite{bourgain2009sum}. However, in order to resolve the weak Erd\H{o}s--Szemer\'edi conjecture for sets of real (or complex) numbers one has to rule out the case when $A$ consists of units in a number field of very large degree (cf. Proposition 10 of \cite{bourgain2009sum}). For example, a resolution of Conjecture 2.9 (``Log-span conjecture'') of \cite{RSZ} would provide such a tool.
\end{Remark}

To formulate Theorem~\ref{thm:few_prods_many_sums} in its most general form we need to introduce the notion of additive (or mutliplicative) tripling introduced in \cite{RSZ}.

Let $(G, +)$ be an abelian group. For a set $U \subset G$ define 
$$
\beta(U) := \inf_{A_1, A_2} \frac{|A_1 + A_ 2 + U|}{|A_1|^{1/2}|A_2|^{1/2}}.
$$

The following Lemma, proved in Statement 3.3 of \cite{RSZ}, connects $\beta$ with the usual notion of additive doubling.
\begin{lemma}[$\beta$ bounds additive doubling, \cite{RSZ}] \label{lm:beta-pluneccke}
Let $A$ be an additive set and write 
$K_+ := |A+A|/|A|$. Then for any $U \subset A$ holds
\begin{equation} \label{eq:beta_plunecke}
\beta(U) \leq K_+^2.
\end{equation}
\end{lemma}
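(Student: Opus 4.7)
The definition of $\beta(U)$ is an infimum, so it suffices to exhibit a single pair $(A_1, A_2)$ satisfying $|A_1 + A_2 + U| \le K_+^2 \, |A_1|^{1/2} |A_2|^{1/2}$. My plan is to take $A_1 = A_2 = A'$, where $A' \subset A$ is nonempty and minimizes the ratio $|A' + A|/|A'|$, i.e.\ the Pl\"unnecke--Petridis minimizer. Since $A$ is itself a candidate, this minimum is at most $K_+$, and Petridis' covering lemma then yields, for every finite set $C$,
$$
|A' + A + C| \;\le\; \frac{|A' + A|}{|A'|} \, |A' + C| \;\le\; K_+ \, |A' + C|.
$$

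First I would apply this covering inequality with $C = U$ to obtain $|A' + A + U| \le K_+ \, |A' + U|$, and then use $U \subset A$ to bound $|A' + U| \le |A' + A| \le K_+ |A'|$; combining these gives $|A' + A + U| \le K_+^2 \, |A'|$. Since $A' \subset A$, the trivial inclusion $A' + A' + U \subset A' + A + U$ then implies $|A' + A' + U| \le K_+^2 \, |A'|$, and hence
$$
\beta(U) \;\le\; \frac{|A' + A' + U|}{|A'|^{1/2} |A'|^{1/2}} \;=\; \frac{|A' + A' + U|}{|A'|} \;\le\; K_+^2.
$$

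The main conceptual step is recognising that the naive symmetric choice $A_1 = A_2 = A$ is too wasteful: it reduces the problem to the Pl\"unnecke--Ruzsa estimate $|3A| \le K_+^3|A|$ and only delivers $\beta(U) \le K_+^3$. Passing to the Petridis minimizer $A'$ saves a factor of $K_+$ because the covering inequality effectively remembers that $U$ lies inside $A$, and this is precisely what produces the correct exponent $2$. I do not anticipate a serious obstacle beyond this observation: the argument is a short composition of two applications of Petridis covering together with the trivial inclusions coming from $A', U \subset A$.
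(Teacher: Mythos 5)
Your proof is correct. The paper itself gives no argument for this lemma --- it simply cites Statement 3.3 of \cite{RSZ} --- but your route via the Petridis minimizer $A'$ of $|X+A|/|X|$ is the standard one: the covering inequality $|A'+A+U|\le K_+|A'+U|$ combined with $|A'+U|\le|A'+A|\le K_+|A'|$ and the inclusion $A'+A'+U\subseteq A'+A+U$ gives exactly $\beta(U)\le K_+^2$, and you correctly identify why the naive choice $A_1=A_2=A$ only yields $K_+^3$.
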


We invite the reader to consult \cite{RSZ} for a detailed treatment of $\beta$ and related quantities under the umbrella term ``induced doubling''. 

In what follows we are going to use $\beta$ mostly with respect to multiplication, and so to avoid confusion will write $\beta_*$ in such cases. 
 
\begin{Theorem}[Few products, many sums for $\beta_*$ and $\lambda_k$] \label{thm:few_prods_many_sums_lambda}
  For any $1 > \epsilon > 0$ the following holds. Let $A \subset \mathbb{Z}$.
  Then 
  $$
  \lambda_k(A) \leq 10 \beta^{\frac{1}{\epsilon}}_* |A|^{2\epsilon \log_2 k},
  $$
  where 
  \begin{equation} \label{eq:mult_beta}
  \beta_*(A) := \inf_{B, C \subset \mathbb{Z}} \frac {|ABC|}{|B|^{1/2}|C|^{1/2}}.
  \end{equation} 
\end{Theorem}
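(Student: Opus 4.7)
The plan is to use Theorem~\ref{thm:query_PFR} in the multiplicative picture of $A$ and then leverage the resulting decision-tree structure to bound $\lambda_k$. First, I would pass through the prime factorization map $\phi: A \to \mathbb{Z}^d$, which is an additive embedding (with $d$ the number of primes ever appearing) under which the multiplicative $\beta_*(A)$ upper-bounds the additive $\beta$ of $\phi(A)$. Using Lemma~\ref{lm:beta-pluneccke} in reverse (via a Plünnecke--Ruzsa type step), one can pass to a large subset of $\phi(A)$ with controlled additive doubling. Theorem~\ref{thm:query_PFR} applied to this subset then yields $A_0 \subset A$ with $|A_0| \geq \beta_*^{-O(1/\epsilon)} |A|$ whose log-image has coordinate query complexity at most $\epsilon \log_2 |A|$. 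Since coordinates in $\mathbb{Z}^d$ are prime exponents, this means every $a \in A_0$ is determined by the exponents of at most $\epsilon \log_2 |A|$ primes, adaptively chosen along a decision tree.

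Next, I would bound $\lambda_k(A_0)$ using this tree structure. The key analytic ingredient is the sub-multiplicative inequality $\lambda_k(BC) \leq |B| \lambda_k(C)$ whenever the multiplication $B \times C \to BC$ is injective, which follows from a direct change of variables $y = bx$ inside the exponential sum (the $L_{2k}$-norm on $[0,1]$ is invariant under integer dilation). Iterating this across the levels of the decision tree together with a recursion on $k$ that doubles the parameter at each step should produce a bound of the form $\lambda_k(A_0) \leq |A|^{O(\epsilon \log_2 k)}$. To pass from $A_0$ back to all of $A$, I would iterate the peel-off procedure, reapplying Theorem~\ref{thm:query_PFR} to the residual set $A \setminus A_0$ and combining the resulting partial bounds via the triangle inequality in $L_{2k}$. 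The prefactor $10 \beta_*^{1/\epsilon}$ in the final statement should absorb the losses from the PFR applications and the triangle inequality.

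The main obstacle is controlling the tree recursion cleanly. The decision tree from Theorem~\ref{thm:query_PFR} may branch heavily at individual nodes, and the naive triangle inequality across branches at the root would cost a multiplicative factor equal to the branching number, potentially destroying the claimed exponent. To match the target $2\epsilon \log_2 k$, one has to design an averaging argument that trades the branching at each node against the overall tree depth (at most $\epsilon \log_2 |A|$), presumably via a Hölder interpolation that distributes the $\ell^2$-mass of the weights $c_a$ level by level and exploits the multiplicative sub-multiplicativity of $\lambda_k$ at each step. Making this trade-off tight with the claimed constant is the delicate technical step of the argument.
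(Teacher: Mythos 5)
Your overall skeleton matches the paper's: prime valuation map, a structure theorem producing a large subset whose valuation image has coordinate query complexity at most $\epsilon\log_2|A|$, a peel-off iteration on the residual set, and the subadditivity $\lambda_k(\bigcup_i A_i)\le\sum_i\lambda_k(A_i)$. However, there are two genuine gaps.

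First, you cannot use Lemma~\ref{lm:beta-pluneccke} ``in reverse'': it says that small doubling forces small $\beta$, and there is no converse extracting a large subset of small additive doubling from small $\beta_*$. The paper does not route through Theorem~\ref{thm:query_PFR} here at all; it applies the underlying tree machinery directly. The largest binary subtree of $T(\Pi(A))$ corresponds to a subset of a quasicube, which by Theorem~\ref{thm:beta} has $\beta$ equal to its cardinality; since $\beta$ is monotone under inclusion, that cardinality is at most $\beta_*(A)$, and Lemma~\ref{lm:tree_bound} then yields a subset of size at least $\beta_*^{-1/\epsilon}|A|$ whose tree has branching depth at most $\epsilon\log_2|A|$. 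Your proposed reduction via doubling is unjustified as stated.

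Second, and more seriously, the analytic step. You correctly identify that the triangle inequality across the branches at a node costs a factor equal to the branching number, which would destroy the exponent, but your proposed remedy (``an averaging argument \dots presumably via a H\"older interpolation'') is not an argument. The actual mechanism is Chang's almost-orthogonality (Proposition~\ref{prop:chang}): splitting $A$ into fibers $p^jA_j$ with the elements of $A_j$ coprime to $p$, the cross terms in the expansion of $\|\sum_jF_j\|_{2k}^{2k}$ with all $2k$ indices distinct vanish identically, because the positive and negative parts of the resulting frequency are divisible by unequal powers of $p$. This gives $\|\sum_jF_j\|_{2k}^2\le\binom{2k}{2}\sum_j\|F_j\|_{2k}^2$, a loss of only $\binom{2k}{2}\le 2k^2$ per tree level \emph{independent of the branching number}; iterating down the depth-$\epsilon\log_2|A|$ tree yields $\binom{2k}{2}^{\epsilon\log_2|A|}\le|A|^{2\epsilon\log_2 k}$. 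Your sub-multiplicative inequality $\lambda_k(BC)\le|B|\lambda_k(C)$ loses exactly the factor $|B|$ you are worried about, and no generic interpolation can recover the claimed bound: the vanishing of the cross terms is a $p$-adic, number-theoretic phenomenon, not a consequence of the tree structure alone.
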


A corollary of Theorem~\ref{thm:few_prods_many_sums_lambda} is the following $k$-fold sum-product estimate, which improves on the state of the art bound in \cite{bourgain2004size}. 
\begin{Theorem}[Iterated sum-product] \label{thm:iterated_sum_product}
   For all $k \in \mathbb{N}, k > 2$ and $A \subset \mathbb{Z}$ the following holds. Let 
   $$
   \delta := \frac {\log \beta_*(A)}{\log |A|}
   $$
   with $\beta_*$ defined by (\ref{eq:mult_beta}).
   Then assuming $|A|$ is large enough
   $$
   |kA| = |\underbrace{A+\ldots+A}_\text{$k$ times}| \geq |A|^{k - 10k\sqrt{\delta \log_2 k}}
   $$
   and
   $$
   |A^{(k)}| = |\underbrace{A \ldots A}_\text{$k$ times}| \geq |A|^{\delta \log_2 k}.
   $$

In particular, there is an absolute $c > 0$  ($c = 10^{-4}$ would do), such that either 
$$
|A^{(k)}| \geq |A|^{b(k)} 
$$
or
$$
|kA| \geq |A|^{b(k)},
$$
with $b(k) = \frac{c \log_2 k}{\log_2 \log_2 k}$.

\end{Theorem}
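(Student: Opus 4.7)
The plan is to derive the multiplicative and additive growth bounds by separate arguments and then combine them via a dichotomy on the size of $\delta$.

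For the multiplicative bound, I iterate the definition of $\beta_*$: taking $B = C = A^{(j)}$ in \eqref{eq:mult_beta} gives $|A^{(2j+1)}| \geq \beta_*(A)\,|A^{(j)}|$. Starting from $|A^{(1)}| = |A|$ and doubling $j$, one gets $|A^{(2^{t+1}-1)}| \geq \beta_*^t |A|$, and combined with the monotonicity of $|A^{(k)}|$ this yields $|A^{(k)}| \geq |A|^{1 + (\lfloor\log_2(k+1)\rfloor - 1)\delta}$ after substituting $\beta_* = |A|^\delta$. Absorbing lower-order terms into the ``$|A|$ large'' hypothesis gives the claimed $|A^{(k)}| \geq |A|^{\delta\log_2 k}$.

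For the additive bound, I combine Theorem~\ref{thm:few_prods_many_sums_lambda} with the Cauchy--Schwarz inequality $|k'A| \geq (|A|/\lambda_{k'}(A))^{k'}$, which yields
\begin{equation*}
|k'A| \geq 10^{-k'}|A|^{k'(1 - 2\epsilon \log_2 k' - \delta/\epsilon)}.
\end{equation*}
The expression $2\epsilon \log_2 k' + \delta/\epsilon$ is minimized at $\epsilon = \sqrt{\delta/(2\log_2 k')}$ with value $2\sqrt{2\delta \log_2 k'}$. Applying this with $k' = k$ and absorbing the $10^{-k}$ factor (valid for $|A|$ large) gives the claimed $|kA| \geq |A|^{k - 10k\sqrt{\delta \log_2 k}}$.

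For the final either/or with $b(k) = c\log_2 k/\log_2\log_2 k$ and $c = 10^{-4}$, I split on the size of $\delta$. If $\delta \geq c/\log_2\log_2 k$, the multiplicative bound already gives $|A^{(k)}| \geq |A|^{\delta\log_2 k} \geq |A|^{b(k)}$. Otherwise $\delta < c/\log_2\log_2 k$, and the key move is to apply the additive inequality not at $k$ but at the much smaller $k' = \lceil \log_2 k\rceil$. Under this constraint $\delta\log_2 k' \leq c$, so $10\sqrt{\delta \log_2 k'} \leq 10\sqrt{c} = 1/10$, giving $|k'A| \geq |A|^{(9/10)k'}$. Monotonicity $|kA| \geq |k'A|$ then yields $|kA| \geq |A|^{(9/10)\log_2 k} \geq |A|^{b(k)}$ for $k$ sufficiently large. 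The main obstacle here is the scale mismatch: the multiplicative estimate contributes at most $\delta \log_2 k$ to the exponent while the additive one contributes up to $k$, and no direct balancing can produce an exponent growing with $k$. The resolution is to apply the additive bound at $k' \approx \log_2 k$, bringing both scales into line; this deliberate downgrade of $k$ to $\log_2 k$ is precisely what produces the extra $1/\log_2\log_2 k$ factor in the final $b(k)$.
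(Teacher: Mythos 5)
Your proof is correct and follows essentially the same route as the paper: iterate the definition of $\beta_*$ for the multiplicative bound, combine Theorem~\ref{thm:few_prods_many_sums_lambda} with Cauchy--Schwarz and optimize $\epsilon \approx \sqrt{\delta/\log_2 k}$ for the additive bound, then dichotomize on $\delta$ versus $c/\log_2\log_2 k$ and apply the additive estimate at $k' \approx \log_2 k$ (the paper's $l = t$). Your choice $\epsilon = \sqrt{\delta/(2\log_2 k')}$ is in fact the intended one (the paper's displayed $\epsilon = (\delta\log_2 l)^{-1/2}$ is a typo), and your handling of non-powers of $2$ is no less careful than the paper's own reduction to $k = 2^t$.
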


This improves on \cite{bourgain2004size} where a similar bound with $b(k)$ of order $\log^{1/4} k$ was obtained. 

The growth of $b(k)$ is essentially the best one can hope for, as shown by the following example. The authors are indebted to an anonynous referee who brought this example to our attention.

\begin{Proposition}
There is an absolute constant $C$ with the following property. Let $k \in \mathbb{N}$. Then there is a set $A \subset \mathbb{Z}$ such that $|kA| + |A^{(k)}| \leq |A|^{C \log k/\log \log k}$.
\end{Proposition}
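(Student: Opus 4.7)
The plan is to exhibit an explicit, very small example and verify the bound by direct estimate. Given $k$ sufficiently large (small values of $k$ can be handled separately by inflating $C$ or by taking $A=\{1\}$), set $d := \lfloor \log_2 k \rfloor$ and let
\[
A := \{2^i : 0 \leq i \leq d\},
\]
the geometric progression of length $d+1$ capped at $2^d \leq k$. This set is engineered so that it has trivial one-dimensional multiplicative structure (hence $|A^{(k)}|$ is linear in $k$) while simultaneously being contained in the short interval $[1,k]$ (hence $|kA|$ is at most quadratic in $k$).

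First I would compute $A^{(k)} = \{2^s : 0 \leq s \leq kd\}$, giving the immediate bound $|A^{(k)}| \leq kd + 1 = O(k \log k)$. Next, from the inclusion $A \subset [1,k]$ it follows that $kA \subset [k, k^2]$, so the crude interval estimate yields $|kA| \leq k^2$. Combining the two,
\[
|kA| + |A^{(k)}| \leq k^2 + k \log_2 k + 1 \leq 2 k^2
\]
for $k$ large enough.

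For the right-hand side of the target inequality I would use the base-independent identity $x^{\log k / \log x} = k$. With $|A| = \lfloor \log_2 k \rfloor + 1$, and therefore $\log |A| = (1 + o(1)) \log \log k$, this gives
\[
|A|^{C \log k / \log \log k} = k^{C \log |A| / \log \log k} = k^{C(1 + o(1))}.
\]
Choosing $C$ to be any sufficiently large absolute constant (for concreteness $C = 3$ works) then guarantees $|A|^{C \log k / \log \log k} \geq k^{5/2} \geq 2k^2 \geq |kA| + |A^{(k)}|$ for all large $k$, completing the verification.

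The whole argument reduces to a one-dimensional elementary calculation, so there is no conceptual obstacle. The only minor nuisance is in the regime of small $k$ where $\log \log k$ misbehaves; those finitely many cases can be absorbed by inflating $C$ or by replacing the example with a one-point set.
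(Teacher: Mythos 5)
Your construction is correct and the verification goes through: for $A=\{2^i: 0\le i\le \lfloor\log_2 k\rfloor\}$ one indeed has $A^{(k)}=\{2^s: 0\le s\le k\lfloor\log_2 k\rfloor\}$, the interval bound gives $|kA|\le k^2$, and since $|A|\ge \log_2 k\ge \log k$ we get $|A|^{3\log k/\log\log k}\ge k^3\ge 2k^2$ for $k\ge 4$, so $C=3$ works for all large $k$. The approach is the same in spirit as the paper's (a set of smooth numbers with bounded exponents sitting in a short interval, so that $|A^{(k)}|$ is controlled by counting exponent vectors and $|kA|$ by the interval length), but the paper uses all primes up to $(\log k)^{1/2}$ with exponents up to $(\log k)^{1/2}$, whereas you use the single prime $2$. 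The difference this buys: the paper's set has $|A|\ge e^{c(\log k)^{1/2}}$, while yours has $|A|=\Theta(\log k)$, so the paper's example is a more robust witness that $b(k)=\Theta(\log k/\log\log k)$ is sharp (it rules out improvements even when one restricts to sets whose size is superpolynomial in $\log k$, which matters since Theorem 1.5 carries an ``$|A|$ large enough'' caveat). Your example satisfies the literal statement, which only asserts existence of some $A$ for each $k$. Two small points: the fallback $A=\{1\}$ for small $k$ does not work, since then $|kA|+|A^{(k)}|=2>1=|A|^{\text{anything}}$; and for $k\le 2$ the exponent $C\log k/\log\log k$ is degenerate, so the statement should be read as holding for $k\ge 3$ (or for $k$ large), exactly as in the paper's own proof — for the remaining small $k\ge 3$ a two-element set and a large $C$ suffice.
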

\begin{proof}
It is essentially due to Erd\H{o}s and Szemer\'edi \cite{ES}. 

In what follows, $c_1,c_2,\dots$ are absolute constants which we do not bother to specify explicitly. Assume $k$ is large and set
\[ A := \{ \prod_{i} p_i^{e_i} :  e_i \leq (\log k)^{1/2}\},\] where the product is over the primes $p_i$ of size at most $(\log k)^{1/2}$.  We have
\[  e^{c _1(\log k)^{1/2}} \leq |A| \leq (\log k)^{\frac{1}{2} \pi (\log^{1/2} k)}.\]
Also,
\[ \max A \leq (\prod_i p_i)^{(\log k)^{1/2}} \leq e^{2 \log^{1/2} k \pi(\log^{1/2} k) \log \log k} \leq k^{c_2},\] and therefore 
\[ |kA| \leq k^{c_2 + 1} \leq |A|^{c_3 (\log k)^{1/2}}.\]
On the other hand 
\[ A^{(k)} := \{ \prod_{i} p_i^{e_i} :  e_i \leq k(\log k)^{1/2}\},\] 
and so
\[ |A^{(k)}| \leq \big( k (\log k)\big)^{\frac{1}{2} \pi ((\log k)^{1/2})} \leq e^{c_4 \frac{(\log k)^{3/2}}{\log \log k}} \leq |A|^{c_5 \frac{\log k}{\log \log k}}. \]
\end{proof}

\section{Proof of Theorem~\ref{thm:iterated_sum_product}}

Before moving forward, let's deduce Theorem~\ref{thm:iterated_sum_product} assuming  Theorem~\ref{thm:few_prods_many_sums_lambda} holds true.

Write $\beta_*(A) = |A|^\delta$. First, let's record the following one liner:

\begin{Claim}[Iterated $\beta_*$]  \label{claim:iterated_beta}
For any integer $t > 1$
$$
|A^{(2^t -1)}| \geq \beta_*^t
$$
\end{Claim}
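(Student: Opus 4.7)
The plan is a short induction on $t$, exploiting the identity $2^t - 1 = 1 + 2(2^{t-1} - 1)$.

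Base case $t=1$: here $A^{(2^1-1)} = A$, and we just need $|A| \geq \beta_*(A)$. This follows by taking $B = C = \{1\}$ in the infimum defining $\beta_*$, which yields $\beta_*(A) \leq |A \cdot \{1\} \cdot \{1\}|/(1 \cdot 1)^{1/2} = |A|$.

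Inductive step: assume the claim for $t-1$, so $|A^{(2^{t-1}-1)}| \geq \beta_*^{t-1}$. Using the decomposition
$$
A^{(2^t - 1)} = A \cdot A^{(2^{t-1}-1)} \cdot A^{(2^{t-1}-1)},
$$
we apply the defining inequality of $\beta_*$ with $B = C = A^{(2^{t-1}-1)}$, which gives
$$
|A^{(2^t - 1)}| \;\geq\; \beta_*(A) \cdot |A^{(2^{t-1}-1)}|^{1/2} \cdot |A^{(2^{t-1}-1)}|^{1/2} \;=\; \beta_*(A) \cdot |A^{(2^{t-1}-1)}|.
$$
Combining with the inductive hypothesis yields $|A^{(2^t-1)}| \geq \beta_* \cdot \beta_*^{t-1} = \beta_*^t$, completing the induction.

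There is no real obstacle here: the entire argument is a one-line application of the definition of $\beta_*$, with the only subtlety being to choose the dyadic decomposition of $2^t - 1$ so that the two factors paired up in the $\beta_*$ inequality are equal (so that the geometric mean $|B|^{1/2}|C|^{1/2}$ collapses back to the cardinality of one of them). The ``one liner'' description in the statement essentially refers to this recursion.
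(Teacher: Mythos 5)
Your proof is correct and matches the paper's argument exactly: the same decomposition $A^{(2^t-1)} = A\cdot A^{(2^{t-1}-1)}\cdot A^{(2^{t-1}-1)}$ and the same application of the definition of $\beta_*$ with $B=C=A^{(2^{t-1}-1)}$. The only difference is cosmetic — you start the induction at $t=1$ via $\beta_*(A)\le|A|$, while the paper folds that observation into its $t=2$ base case.
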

\begin{proof}
For $t=2$ the claim follows from the definition of $\beta_*$. For $t > 2$ one has by induction
$$
|A^{(2^t -1)}| = |A^{2^{(t-1)} - 1}A^{2^{(t-1)} -1}A| \geq \beta_*|A^{2^{(t-1)} -1}| \geq \beta_*^t.
$$
\end{proof}

At the expense of decreasing the constants in $b(k)$, from now on let's assume that $k = 2^t$ for some integer $t$. Henceforth $c > 0$ is assumed to be a small fixed constant to be defined in due course.

Then if $\delta \geq c / \log_2 t$, by Claim~\ref{claim:iterated_beta},
$$
|A^{(k)}| = |A|^{2^t} \geq \beta_*(A)^t \geq |A|^{c t/\log_2 t} \geq |A|^{b(k)}.
$$

Otherwise, from the definition of $\lambda_l$ with all the weights equal to $|A|^{-1/2}$ and the Cauchy-Schwartz inequality,
$$
|lA| \geq \frac{|A|^{l}}{\lambda_l(A)^l}.
$$
Applying Theorem~\ref{thm:few_prods_many_sums_lambda} with $\beta_*(A) = |A|^\delta$ one has for any $\epsilon > 0$ and $l > 0$
$$
|lA| \geq 10^{-l}|A|^{l}|A|^{-\delta l/\epsilon - 2l \epsilon \log_2 l}.
$$
Taking $\epsilon = (\delta \log_2 l)^{-1/2}$ and assuming $|A|$ is large enough, we get
$$
|lA| \geq |A|^{l - 10l\sqrt{\delta \log_2 l}} \geq |A|^{l - 10l\sqrt{c \log_2 l/\log_2 t}}.
$$
Recalling that $k = 2^t$ we further estimate very crudely with $l=t$
$$
|kA| \geq |tA| \geq |A|^{t - 10tc^{1/2}} \geq |A|^{t/2} \geq |A|^{b(k)}.
$$

\section{Background on $\beta$ and quasi-cubes}

A \emph{quasicube} is a generalization of the binary cube $\{0, 1\}^d$ and is defined recursively as follows. 

\begin{Definition}[Quasicubes]
We say that a set $H \subset \mathbb{Z}^d$ is a \emph{quasicube} if there is a coordinate projection $\pi_i$ such that $|\pi_i(H)| = 2$ and  either 
\begin{enumerate}
    \item $\pi_i$ is injective
    \item $\pi_i(H) = \{x, y \}$ and both 
    $\pi^{-1}(x)$ and $\pi^{-1}(y)$ are quasicubes.
\end{enumerate}
\end{Definition}

The following theorem was proved in \cite{RSZ}, and a short self-contained proof can be found in \cite{GMRSZ}.

\begin{Theorem}[Subsets of quasicubes have large $\beta$, \cite{RSZ}] \label{thm:beta}
  Let $H$ be an arbitrary quasicube. Then for any $U \subset H$ holds
  \begin{equation} \label{eq:beta_estimate_hypercube}
  \beta(U) = |U|.
  \end{equation}
\end{Theorem}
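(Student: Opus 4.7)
The upper bound $\beta(U) \leq |U|$ is immediate by taking $A_1 = A_2 = \{0\}$, so the substance lies in the matching lower bound $|A_1 + A_2 + U| \geq |U|\sqrt{|A_1||A_2|}$ for all finite $A_1, A_2 \subset \mathbb{Z}^d$. My plan is to induct on the recursive structure of the quasicube $H$.

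For the base case, the defining projection $\pi_i$ is injective on $H$, forcing $|H| = 2$, and the only nontrivial subcase is $|U| = 2$, say $U = \{u_0, u_1\}$ with $u_0 \neq u_1$. Freiman's lemma $|A_1+A_2| \geq |A_1|+|A_2|-1$ for torsion-free abelian groups, combined with the elementary observation that translating any non-empty set by a nonzero vector in a linearly orderable group adds at least one extremal element, gives $|A_1+A_2+U| \geq |A_1+A_2|+1 \geq |A_1|+|A_2| \geq 2\sqrt{|A_1||A_2|}$.

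For the inductive step, let $\pi = \pi_i$ be the defining projection with $\pi(H) = \{0,1\}$ (after translation), and split $U = U_0 \sqcup U_1$ with $U_j := U \cap \pi^{-1}(j)$. Each $H_j := H \cap \pi^{-1}(j)$ inherits a quasicube structure of strictly smaller size (via some other coordinate projection $\pi_{j'}$), so by induction $\beta(U_j) \geq |U_j|$ in the ambient group. Fibering $A_k = \bigsqcup_{s \in S_k} A_k^{(s)}$ along $\pi$, the key geometric observation is that for each pair $(s_1, s_2) \in S_1 \times S_2$, the two sumsets $A_1^{(s_1)} + A_2^{(s_2)} + U_0$ and $A_1^{(s_1)} + A_2^{(s_2)} + U_1$ lie in disjoint $\pi$-fibers of $A_1+A_2+U$ at heights $s_1+s_2$ and $s_1+s_2+1$, so applying the inductive hypothesis to each yields
\begin{equation*}
|A_1+A_2+U| \geq |A_1^{(s_1)}+A_2^{(s_2)}+U_0| + |A_1^{(s_1)}+A_2^{(s_2)}+U_1| \geq |U|\sqrt{|A_1^{(s_1)}||A_2^{(s_2)}|}
\end{equation*}
for any single choice of $(s_1, s_2)$.

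The principal obstacle is promoting this single-pair estimate to the full target $|U|\sqrt{|A_1||A_2|} = |U|\sqrt{(\sum_s|A_1^{(s)}|)(\sum_s|A_2^{(s)}|)}$, since distinct pairs $(s_1, s_2)$ with the same sum $s_1+s_2$ contribute to the same $\pi$-fiber and cannot simply be added. My plan is to proceed fiber-by-fiber by writing $|A_1+A_2+U| = \sum_y |(A_1+A_2+U)^{(y)}|$, selecting for each $y$ an optimal triple $(s_1, s_2, j)$ with $s_1+s_2+j = y$ to get the per-fiber bound $|(A_1+A_2+U)^{(y)}| \geq |U_j|\sqrt{|A_1^{(s_1)}||A_2^{(s_2)}|}$, and then combining with Cauchy-Schwarz via the identities $\sum_s |A_k^{(s)}| = |A_k|$. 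The extremal configuration of the binary cube $\{0,1\}^d$ with parallel-interval $A_1, A_2$ shows the bound is tight and pins down how sharp the bookkeeping must be.
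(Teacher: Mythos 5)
The paper does not prove this statement at all: Theorem~\ref{thm:beta} is imported from \cite{RSZ}, with a pointer to \cite{GMRSZ} for a short self-contained proof. So there is no in-paper argument to compare against, and your proposal has to stand on its own. Its skeleton --- induction on the recursive quasicube structure, splitting $U=U_0\sqcup U_1$ along the defining projection, and exploiting that $A_1^{(s_1)}+A_2^{(s_2)}+U_0$ and $A_1^{(s_1)}+A_2^{(s_2)}+U_1$ land in distinct $\pi$-fibers --- is the natural one and is in the spirit of the cited proofs. The base case is fine (modulo the misnomer: $|A_1+A_2|\ge|A_1|+|A_2|-1$ in a torsion-free group is not what this paper calls Freiman's lemma, and the normalization $\pi(H)=\{0,1\}$ is not literally achievable by translation, though only distinctness of the two fiber heights is used).

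The genuine gap is exactly where you flag ``the principal obstacle,'' and it is not closed. After passing to fibers, the per-fiber information you retain is only
$$
|(A_1+A_2+U)^{(y)}|\;\ge\;\max_{s_1+s_2+j=y}|U_j|\sqrt{|A_1^{(s_1)}||A_2^{(s_2)}|},
$$
and the theorem then reduces to the inequality
$$
\sum_y\;\max_{s+t+j=y} u_j\sqrt{a_s b_t}\;\ge\;(u_0+u_1)\sqrt{\Bigl(\sum_s a_s\Bigr)\Bigl(\sum_t b_t\Bigr)}.
$$
This is a discrete Pr\'ekopa--Leindler-type statement (with equality already for the cube with uniform fibers, so there is no slack), and it does not follow from ``Cauchy--Schwarz via the identities $\sum_s|A_k^{(s)}|=|A_k|$'' --- Cauchy--Schwarz points the wrong way here, bounding sums of terms $\sqrt{a_sb_t}$ \emph{above} by $\sqrt{\sum a\sum b}$, whereas you need a lower bound on a sum of maxima in which colliding pairs $(s_1,s_2)$ with equal $s_1+s_2$ are collapsed to a single representative. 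Proving this recombination inequality is the actual content of the theorem; the cited proofs handle it by establishing a superadditivity statement of the form $\beta(U_0\sqcup U_1)\ge\beta(U_0)+\beta(U_1)$ for $U_0,U_1$ lying in two distinct parallel fibers, which requires its own (level-set/compression style) argument rather than a direct application of Cauchy--Schwarz. As written, your proof stops exactly at the hard step.
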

The power of Theorem~\ref{thm:beta} is that the estimate (\ref{eq:beta_estimate_hypercube}) depends neither on the dimension of the ambient space nor on the density of $U$ in $H$. 



\section{Branching depth and binary subtrees}

Let $T$ be a rooted tree. We will write $L(T)$ for the set of leaves. We further define the following quantities.

\begin{Definition}[Branching depth]
Let $r$ be the root of $T$ and write $P(l \to r)$ for the set of vertices on the (unique) path from $l$ to $r$. Let $d_l$ be the number of vertices in $P(l \to r)$ with at least two children.
Then the \emph{branch-depth} of $T$ is defined as 
$$
d(T) := \max_{l \in L(T)} d_l.
$$
\end{Definition}

\begin{Definition}[Largest binary subtree]
    Let us call a rooted tree \emph{binary} if each node has at most two children.
    Further, for a tree $T$ write 
    $$
    b(T) = \max_{ \text{ binary } T' \subset T} |L(T')|.
    $$
\end{Definition}

The strategy for the rest of the argument is to prove that for any tree $T$ either there is a large subtree $T'$ with $d(T') = o(\log |L(T)|)$ or $\log b(T') \gg \log |L(T)|$. 

Let $T$ be a tree with $N := |L(T)|$. Fix $\epsilon > 0$. 

\begin{Definition}[Largest $\epsilon$-low subtree]
    Let us call a rooted tree $T'$ \emph{$\epsilon$-low} if $d(T') \leq \epsilon \log_2 N$.
    Further, for a tree $T$ write 
    $$
    D_\epsilon(T) = \max_{ \text{ $\epsilon$-low } T' \subset T} |L(T')|.
    $$
\end{Definition}


\begin{lemma}[Low vs binary subtree alternative] \label{lm:tree_bound}
   For any tree $T$ and $1\ge \epsilon > 0$
   $$
       D_\epsilon(T) b^{1/\epsilon} (T) \geq |L(T)|.
   $$
\end{lemma}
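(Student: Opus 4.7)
My plan is to prove the equivalent form $M_D(T) \geq N / b(T)^{1/\epsilon}$ --- with $D := \epsilon \log_2 N$, $N := |L(T)|$, and $M_D(T) := \max\{|L(T')| : T' \subseteq T,\; d(T') \leq D\}$ --- by strong induction on $N$. First I would, without loss of generality, contract every internal vertex of $T$ having exactly one child; this operation preserves $|L(T)|$, $b(T)$, and the $d$-values of all subtrees, so in the inductive step I may assume the root of $T$ has $k \geq 2$ children with rooted subtrees $T_1,\dots,T_k$ of sizes $n_i$ and $b$-values $b_i := b(T_i)$. The key structural input is the defining identity $b(T) = b_{(1)} + b_{(2)}$, the sum of the two largest $b_i$.

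For the inductive step I would consider two strategies for constructing an $\epsilon$-low subtree $T' \subseteq T$: (A) descend into a single child $T_i$ without branching at the root, giving $M_D(T) \geq M_D(T_i)$; or (B) include several children at the cost of one branching unit, giving $M_D(T) \geq \sum_{i \in S} M_{D-1}(T_i)$ for any $S$ with $|S|\ge 2$. The inductive hypothesis supplies $M_{D'}(T_i) \geq n_i \cdot b_i^{-\log_2 n_i / D'}$ for each $i$, and the task reduces to showing that the max of the two strategies reaches the target $N \cdot b(T)^{-\log_2 N / D}$.

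The crux is a case split driven by $b(T) = b_{(1)} + b_{(2)}$. If a single child $c_\sigma$ carries both most of the $b$-weight ($b_\sigma = b_{(1)}$) and a nontrivial share of the leaves, then Strategy A on $T_\sigma$ inherits the inductive bound with enough slack to cover the passage from $n_\sigma$ to $N$. Otherwise the $b$-mass is spread out, and every child other than the uniquely heaviest-$b$ one satisfies $b_i \leq b_{(2)} \leq b(T)/2$; this gives the multiplicative slack in Strategy B needed to absorb the loss in going from $D$ to $D-1$ in the exponent. I expect the main obstacle to be the crossover regime in which the heaviest-$b$ and heaviest-$n$ children are distinct and both of medium size; closing the bound there should hinge on the algebraic identity
\[\log_2 N \cdot \log_2 b \;-\; \log_2 n_i \cdot \log_2 b_i \;=\; \log_2 n_i \cdot \log_2(b/b_i) \;+\; \log_2(N/n_i) \cdot \log_2 b,\]
so that the excess produced by $b_i \leq b(T)/2$ in one term exactly compensates for the deficit coming from $D-1$ in the other.
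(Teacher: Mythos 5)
Your architecture coincides with the paper's: induct with the root having at least two children, and play off the same two strategies --- descend into a single child without spending a branching unit, versus attaching several children at the cost of one --- with a case split on how the binary-subtree weight and the leaves are distributed among the children. The easy cases (at least two children each holding a constant fraction of the weight, or all children small) go through essentially as you describe.

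The gap is that you do not prove the crossover case, you only conjecture that the ``excess produced by $b_i \le b(T)/2$ \dots exactly compensates for the deficit coming from $D-1$'' --- and this case is where the entire difficulty of the lemma lives. Concretely, suppose exactly one child $T_1$ has more than $2^{-1/\epsilon}N$ leaves and also carries $b_{(1)}$. Writing $b_M := \max_{i\ne 1} b_i = c\,b_1$, Strategy A gains only the factor $(1+c)^{1/\epsilon}$ over the inductive bound for $T_1$ (since $b(T)\ge (1+c)b_1$), while Strategy B applied to the remaining children yields at least $(N-N_1)b_M^{-1/\epsilon}$ and hence gains $(1+1/c)^{1/\epsilon}$. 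One must show that for every admissible ratio $N/N_1 \in (1, 2^{1/\epsilon})$ at least one of these gains suffices, which after eliminating $N/N_1$ reduces to
$$
(1+c)^{1/\epsilon} \;\ge\; 1+\frac{1}{(1+1/c)^{1/\epsilon}-1},
$$
an inequality that holds precisely for $\epsilon\le 1$, with equality at $\epsilon=1$. Your sketch never invokes the hypothesis $\epsilon\le 1$, yet it must enter exactly here: the estimate is tight, so no ``exact compensation'' can be read off from the exponent identity you propose without carrying out this optimization. Two smaller points: your observation that every non-heaviest child satisfies $b_i\le b(T)/2$ is true unconditionally (since $b(T)=b_{(1)}+b_{(2)}\ge 2b_{(2)}$), so it cannot by itself separate your two cases; and the inductive input $M_{D-1}(T_i)\ge n_i b_i^{-\log_2 n_i/(D-1)}$ is an instance of the lemma only when $D-1\le \log_2 n_i$ (and $D>1$), so the remaining ranges need a word.
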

\begin{proof}
   The proof is by induction on the height of $T$. Write $N := |L(T)|$. For a single root or a root with a single child the inequality is trivial. For a tree of height $1$ with at least two children we have $b(T) = 2$. If $\epsilon \log_2 N \geq  1$, then the whole tree is $\epsilon$-low and we are done. Otherwise, 
   $$
   b^{1/\epsilon}(T) \geq N.
   $$
   
   Now assume the height is larger. Without loss of generality we may assume that the root has at least two children. Let $T_i$ be the subtrees rooted at the children, write $D_i := D_\epsilon(T_i)$, $N_i := |L(T_i)|$, $b_i := b(T_i)$ and $b := b(T)$.
   
   Call $T_i$ \emph{small} if $N_i \leq 2^{-1/\epsilon} N$, otherwise call it \emph{big}. Denote the families of these subtrees by $S$ and $B$, respectively.
   
   

   \textbf{Claim.}
   If there are no big subtrees, i.e., $\sum_{T_i \in B} N_i= 0$, we are done. Indeed, in this case the branching depth of the tree constructed by attaching the maximal $\epsilon$-low trees in $T_i$ to the root of $T$ is at most 
   \begin{equation} \label{eq:low_tree_depth}
        \epsilon \max_i \log_2 N_i + 1 \leq \epsilon \log_2 N,
   \end{equation}
   so 
   $$
   D_\epsilon(T) \geq \sum_i D_\epsilon (T_i) \geq \frac{\sum_i N_i}{\max_i b^{1/\epsilon}_i} = \frac{N}{\max_i b^{1/\epsilon}_i}.
   $$
   But clearly $b^{1/\epsilon}(T) \geq \max_i b^{1/\epsilon}_i$ and the induction is closed. \\
  
   \textbf{Claim.} 
   If there are at least two big subtrees, we are also done. Indeed, let $T_i, T_j \in B$. Without loss of generality, $b_j \geq b_i$, so
   $$
   b(T) \geq b_{i} + b_j \geq 2b_{i}.
   $$
   Clearly, $D_\epsilon(T) \geq D_\epsilon(T_{i})$ and thus
   $$
   D_\epsilon(T)b^{1/\epsilon}(T) \geq D_\epsilon(T_{i})2^{1/\epsilon} b^{1/\epsilon}_{i}  \geq 2^{1/\epsilon}N_{i} \geq N.
   $$
   \\\noindent
   So the remaining case is when $T_1$ is a single big subtree, so $N_1 > 2^{-1/\epsilon}N$. Write 
   $$
   b_M = \max \{b_i : T_i \text{ is small } \}.
   $$
   Let $c > 0$ be such that $b_M = c b_1$. We have $b(T) \geq (1+c)b_1=(1+\frac 1c)b_M$. Since $D_\epsilon(T) \geq D_1$, we are done unless 
   $$
   ((1+c)b_1)^{1/\epsilon} D_1 \leq N.
   $$
   
   By induction, the left hand side is at least $(1+c)^{1/\epsilon}N_1$, so it must be 
   \begin{equation} \label{eq:low_bound_N}
   (1 + c)^{1/\epsilon} \leq \frac{N}{N_1}  < 2^{1/\epsilon}.
   \end{equation}
   
   In particular, $c < 1$ and $b_M < b_1$.
   
   Since attaching to the root of $T$ increases the branch-depth  by at most one, it follows similarly to (\ref{eq:low_tree_depth}) that
   $$
   D_\epsilon(T) \geq \sum_{T_i \in S} D_\epsilon(T_i) \geq \frac{\sum_{T_i \in S} N_i}{b^{1/\epsilon}_M}.
   $$
   Thus,
   $$
   D_\epsilon(T) b^{1/\epsilon}(T) \geq \frac{\sum_{T_i \in S} N_i}{b^{1/\epsilon}_M}b^{1/\epsilon}(T) \geq (1+1/c)^{1/\epsilon}(N-N_1).
   $$
   We are done if the right hand side is at least $N$, that is if
   $$
   (1+1/c)^{1/\epsilon}(N - N_1) \geq N.
   $$
   Dividing by $N_1$, this is equivalent to
    $$
   (1+1/c)^{1/\epsilon}(\frac{N}{N_1}-1) \geq \frac{N}{N_1}.
   $$
   Rearranging for $\frac{N}{N_1}$ gives that we need
     $$
     \frac{N}{N_1}\ge 
   \frac{(1+1/c)^{1/\epsilon}}{(1+1/c)^{1/\epsilon}-1}.
   $$
   Using the left inequality from (\ref{eq:low_bound_N}), it is sufficient to prove 
   $$
   (1+c)^{1/\epsilon}\ge \frac{(1+1/c)^{1/\epsilon}}{(1+1/c)^{1/\epsilon}-1}=1+\frac{1}{(1+1/c)^{1/\epsilon}-1}.
   $$
   The left hand side is decreasing in $\epsilon$ and the right hand side is increasing in $\epsilon$.
   Since they are equal for $\epsilon=1$, the inequality holds for all $\epsilon\le 1$.
\end{proof}

\section{Proof of Theorem~\ref{thm:query_PFR}} \label{sec:proof_query_PFR}

The initial step to prove Theorem~\ref{thm:query_PFR} is to transform the set $A$ in question into a rooted tree $T(A)$. We build $T(A)$ recursively. 

Let $1 \leq i \leq d$ be some coordinate index and $\pi_i$ be the corresponding coordinate projection. Any set $X$ \emph{fibers} with respect to $\pi_i$ in the sense that 
$$
X = \bigsqcup_{y \in \pi_i(X)} \pi_i^{-1}(y).
$$
We call the disjoint sets $X_y := \pi_i^{-1}(y)$ \emph{fibers} of $X$ above $y$.

Now let's get back to the construction of $T(A)$. It has a root $v_0$, 
and it is the only node of $T(A)$ is $A$ is a singleton. If not, let  $j$ be the minimal coordinate index such that $|\pi_j(A)| > 1$. We recursively attach to $v_0$ the trees $T(A_x)$ for each fiber $A_x, x \in \pi_j(A)$ induced the projection $\pi_j$. The root of $T(A_x)$ is labelled with the pair $(j, \pi_j(A_x))$. The process will terminate since the coordinate index always increases.

Since the process terminates when the fiber becomes a singleton set, the elements of $A$ are in one-to-one correspondence with the leafs of $T(A)$ endowed with the labels.

Now everything is set up for the proof of Theorem~\ref{thm:query_PFR}.
\begin{proof}
    Let $\epsilon > 0$ be fixed and $A \subset G$ be a set with 
    $K := |A+A|/|A|$. 
    
    Let $T_A$ be the rooted tree corresponding to $A$ and $T_B$ be a (one of possibly many) largest binary subtree of $T_A$. 
    
    \textbf{Claim.}
    We claim that the set $B \subset A$ which corresponds to the leaves $L(T_B)$ as described above is contained in a quasicube. The claim follows from a simple induction on the height of the tree $T_B$. Indeed, a single root or a binary tree of height one is clearly a quasicube subset. Otherwise, the root has either one or two children, and in both cases the claim follows from the definition of a quasicube.
    
    By the hypothethis of the theorem and (\ref{eq:beta_plunecke}),
    $$
    K \geq \beta^{1/2}(B) = |B|^{1/2}. 
    $$
    
    It therefore follows that 
    $$
    b(T_A) = |L(T_B)| = |B| \leq K^2.
    $$
    
    We immediately conclude by Lemma~\ref{lm:tree_bound} that 
    \begin{equation} \label{eq:tree_depth_bound}
    D_\epsilon(T_A) \geq b^{-1/\epsilon}(T_A) |L(T_A)| \geq K^{-\frac{2}{\epsilon}} |A|.
    \end{equation}
    Thus, by definition, there is a subtree $T' \subset T_A$ with branching depth at most  $\epsilon \log |A|$ and size at least $K^{-\frac{2}{\epsilon}} |A|$.  
    
    Let $A' \subset A$ be the subset corresponding to the leaves $L(T')$. In order to conclude the proof it remains to note the coordinate query complexity of $A'$ is at most the depth of $T'$. Let $x \in A'$. For any $j$ the $j$-coordinate query returns the value $\pi_j(x)$, which uniquely identifies the $\pi_j$-fiber of $x$. Thus, we can traverse $T(A')$ from the root to the unique leaf corresponding to $x$ each time taking the branch corresponding to the coordinate query. The number of queries is going to be at most the depth of $T(A')$, and we are done by (\ref{eq:tree_depth_bound}).
\end{proof}

\section{Proof of Theorem~\ref{thm:few_prods_many_sums} and Theorem~~\ref{thm:few_prods_many_sums_lambda}}

\subsection{Prime valuation mapping}

Let $A$ be a set of integers, the goal 
is essentially to prove that either $\beta_*(A)$ (that is, $\beta(A)$ with respect to multiplication) is large or $E_+(A)$ is small.

The first step is to transform $A$ into a multidimensional set using the prime valuation map which is as follows. Let $\{p_1, \ldots, p_D \}$ be the set of prime divisors of the elements in $A$. We consider the valuation map $\Pi: \mathbb{Z} \to \mathbb{Z}^D$:
$$
\Pi(a) = (v_{p_1}(a), \ldots, v_{p_D}(a))
$$
where $v_{p_i}(a)$ is the maximal power $\alpha$ such that $p^\alpha_i$ divides $a$.

Clearly for integer sets $\Pi(X) + \Pi(Y) = \Pi(XY)$ so 
$$
\beta_+(\Pi(A)) = \beta_*(A).
$$

Since $\Pi$ is one-to-one from now on we identify any  $A$ with $\mc{A} := \Pi(A) \subset \mathbb{Z}^D$. The convention is that calligraphic letters live in $\mathbb{Z}^D$ and capital italic live in $\mathbb{Z}$. We also follow that convention that $\pi_i$ is the one-dimensional projection in $\mathbb{Z}^D$ to the coordinate corresponding to the prime $p_i$.

\subsection{Chang's argument}

Let's recall Proposition~6 of \cite{chang2003erdHos}. 
\begin{Proposition} \label{prop:chang}
Let $p$ be a fixed prime, and let 
$$
F_j(x) \in \left \langle \left\{ e^{2\pi i p^j n x} | (n, p) = 1 \right\} \right \rangle^{+}. 
$$
Then 
$$
\left (\| \sum_j F_j\|_{2k} \right)^2 \leq \binom {2k}{2} \sum_j \|F_j(x) \|^2_{2k}.
$$
\end{Proposition}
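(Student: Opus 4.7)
The plan is to expand $\|G\|_{2k}^{2k}$ as a diagonal sum over $2k$-tuples of indices and to use the $p$-adic structure of the frequency supports of the $F_j$ to kill most of the terms. Setting $G := \sum_j F_j$, we write
$$\|G\|_{2k}^{2k} = \int_0^1 G(x)^k \overline{G(x)}^k\, dx = \sum_{j_1,\ldots,j_k,\,j'_1,\ldots,j'_k} I(\mathbf{j}, \mathbf{j}'),$$
where $I(\mathbf{j}, \mathbf{j}') := \int \prod_a F_{j_a}(x) \prod_b \overline{F_{j'_b}}(x)\, dx$. Since each $F_j$ is a non-negative combination of exponentials, each $I(\mathbf{j}, \mathbf{j}')$ is the weighted count of solutions of a Diophantine equation on the $p^j n$-type frequencies and is therefore a non-negative real. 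This positivity will be crucial later, because it lets us bound $\|G\|_{2k}^{2k}$ by a sum of $I$'s without losing information to the triangle inequality.

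The first structural observation is the $p$-adic dichotomy. By hypothesis each frequency in $F_j$ has $p$-adic valuation exactly $j$, because $n$ is coprime to $p$. For $I(\mathbf{j},\mathbf{j}')$ to be non-zero the signed frequency sum must vanish in $\mathbb{Z}$. Writing $m := \min(\mathbf{j} \cup \mathbf{j}')$ and reducing modulo $p^{m+1}$, every index strictly above $m$ contributes a multiple of $p^{m+1}$, while every index equal to $m$ contributes $p^m n$ with $n$ coprime to $p$. If $m$ were attained at a single position, the reduction would be a non-zero element of $\mathbb{Z}/p^{m+1}\mathbb{Z}$, contradicting vanishing. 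So the minimum must be attained by \emph{at least two} of the $2k$ positions.

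The pairing step is now immediate: for every non-vanishing tuple there is at least one pair $\{a,b\}$ among the $\binom{2k}{2}$ position pairs with $j_a = j_b = m$. Summing over all such pairs and swapping the order of summation gives
$$\|G\|_{2k}^{2k} \le \sum_{\{a,b\}} S_{\{a,b\}}, \qquad S_{\{a,b\}} := \sum_{\mathbf{j},\mathbf{j}':\, j_a=j_b\le \text{others}} I(\mathbf{j},\mathbf{j}').$$
For a fixed pair $\{a,b\}$, grouping by the common value $j$ and letting the remaining $2k-2$ indices range over $\{l: l \ge j\}$ rewrites $S_{\{a,b\}}$ as $\sum_j \int \Phi_j \cdot R_j\, dx$, where $\Phi_j$ is one of $F_j^2$, $|F_j|^2$ or $\overline{F_j}^2$ depending on whether $a,b$ lie in $G^k$ or $\overline{G}^k$, and $R_j$ is a product of $2k-2$ factors drawn from $G_{\ge j} := \sum_{l\ge j} F_l$ and $\overline{G_{\ge j}}$. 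A H\"older inequality with conjugate exponents $(k, k/(k-1))$ then bounds each such integral by $\|F_j\|_{2k}^2 \cdot \|G\|_{2k}^{2k-2}$; summing over the $\binom{2k}{2}$ pairs and dividing by $\|G\|_{2k}^{2k-2}$ yields the claim.

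The main obstacle I foresee lies in the final H\"older step. A naive application that inserts absolute values inside the tail factor would replace $G_{\ge j}$ by $\sum_l |F_l|$ and produce $\|\sum_l |F_l|\|_{2k}^{2k-2}$ in place of $\|G\|_{2k}^{2k-2}$; the former can strictly exceed the latter and so breaks the bound. Recovering the genuine $\|G\|_{2k}$ factor must rely on the non-negativity of the $I(\mathbf{j}, \mathbf{j}')$ established in paragraph one, which should allow the inner sums over the free indices to collapse into actual powers of $G$ (rather than of $\sum_l|F_l|$) before H\"older is invoked. I expect this bookkeeping is the most delicate point of the argument.
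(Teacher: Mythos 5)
Your proof is correct and essentially the paper's: expand the $2k$-th power, use $p$-adic valuations to annihilate the tuples with no repeated index, charge each surviving (non-negative) term to one of the $\binom{2k}{2}$ position pairs, and finish with H\"older. The delicacy you flag resolves exactly as you predict --- the inner sums collapse to genuine powers of $G_{\geq j}$, and $\|G_{\geq j}\|_{2k}^{2k} \leq \|G\|_{2k}^{2k}$ because the former is a sub-sum of the non-negative terms of the latter --- though the paper sidesteps this entirely by pairing any two equal indices (not necessarily the minimal ones) and letting the remaining $2k-2$ indices range freely, so that the free factors reconstitute $G$ itself and term-wise non-negativity absorbs the over-counting.
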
 
\begin{proof}
Expanding the brackets, one can write 
$$
\| \sum_j F_j\|^{2k}_{2k} = \int^1_0 |\sum_j F_j(x)|^{2k} dx
$$ 
as a sum of terms of the form 
\begin{equation} \label{eq:expansion}
    \int^1_0 F_{j_1}(x)\ldots F_{j_k}(x)\cj{F_{j_{k+1}}}(x)\ldots \cj{F_{j_{2k}}}(x) dx.
\end{equation}

We claim that if $\{ j_i \}^{2k}_{i=1}$ are all distinct, the integral above is zero. Indeed, if this is the case, the integral above can be further broken down into a sum of integrals
$$
C_{j_1,\ldots,j_{2k}}\int^1_0  e(p^{j_1} n_{j_1} + \ldots + p^{j_k} n_{j_k} - p^{j_{k+1}} n_{j_{k+1}} - \ldots - p^{j_{2k}}n_{j_{2k}}) dx.
$$
Since $(n_i, p) = 1$, the expression in the brackets is non-zero, as the negative and the positive part are divisible by unequal powers of $p$. Thus, the integral evaluates to zero. 

It follows that only the terms with at least two equal indices survive. Let $j = j_{i_1} = j_{i_2}$. There are three cases: $i_1, i_2 \leq k$, $i_1 \leq k < i_2$ and 
$k < i_1, i_2$. 

For the terms of the first type, one can apply the H\"older inequality and estimate
\begin{eqnarray*}
  \binom {k}{2} \sum_j  \int^{1}_0 F^2_j \sum_{j_2,\ldots,j_{2k}}F_{j_2}F_{j_3}\ldots F_{j_k} \cj{F_{j_{k+1}}}\ldots \cj{F_{j_{2k}}} dx \leq& \\
   \binom {k}{2} \sum_j  \int^{1}_0 |F_j|^2 \left|\sum_i F_i \right|^{2(k-1)} dx  \leq& \\
   \binom {k}{2} \sum_j \| F_j\|^2_{2k} \| \sum_i F_i \|^{2k-2}_{2k}&
\end{eqnarray*}

The remaining two cases can be treated similarly, arriving at the estimate

\begin{eqnarray*}
\| \sum_j F_j\|^{2k}_{2k} \leq \left( k^2 + 2 \binom {k}{2} \right)  \sum_j \| F_j\|^2_{2k} \| \sum_i F_i \|^{2k-2}_{2k} & \\
= \binom {2k}{2} \sum_j \| F_j\|^2_{2k} \| \sum_i F_i \|^{2k-2}_{2k} &.
\end{eqnarray*}
The claim follows.
\end{proof}

\begin{lemma} \label{lm:lambda_complexity_bound}
Let $A \subset \mathbb{Z}$. Then 
   $$
     \lambda_k(A) \leq \binom{2k}{2}^{q(\mc{A})},
   $$
   where $q(\mc{A})$ is the coordinate query complexity of $\mc{A} := \Pi(A)$.
\end{lemma}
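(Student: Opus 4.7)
The plan is to induct on $q := q(\mathcal{A})$. The base case $q = 0$ forces $|A| = 1$, so any admissible weight is $c = 1$ and $\lambda_k(A) = \|e(ax)\|_{2k}^2 = 1 = \binom{2k}{2}^0$.

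For the inductive step, fix an optimal adaptive query strategy of depth $q$ for $\mathcal{A}$. Its very first query reads off some fixed coordinate, which corresponds to a prime $p$; continuing the strategy after the answer gives, for each value of $v_p$, a valid strategy of depth at most $q-1$ on the corresponding fiber. Partition
$$
A = \bigsqcup_{j} A_j, \qquad A_j := \{ a \in A : v_p(a) = j \},
$$
so that $q(\Pi(A_j)) \leq q-1$ for every $j$ appearing.

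Now take arbitrary positive weights $(c_a)_{a \in A}$ with $\sum_a c_a^2 = 1$, and set $f(x) := \sum_{a \in A} c_a e(ax)$. Decompose $f = \sum_j F_j$ where $F_j(x) := \sum_{a \in A_j} c_a e(ax)$. Every $a \in A_j$ factors as $a = p^j n_a$ with $(n_a, p) = 1$, so $F_j$ lies in the span $\langle \{ e(p^j n x) : (n,p)=1 \} \rangle^+$ required by Proposition~\ref{prop:chang}. Hence
$$
\|f\|_{2k}^2 \leq \binom{2k}{2} \sum_j \|F_j\|_{2k}^2.
$$
Write $W_j := \sum_{a \in A_j} c_a^2$, so $\sum_j W_j = 1$, and let $\tilde F_j := W_j^{-1/2} F_j$, which is of the form appearing in the definition of $\lambda_k(A_j)$ (weights with $\ell^2$-norm one). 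By the inductive hypothesis applied to each $A_j$,
$$
\|\tilde F_j\|_{2k}^2 \leq \lambda_k(A_j) \leq \binom{2k}{2}^{q-1}.
$$
Therefore $\sum_j \|F_j\|_{2k}^2 = \sum_j W_j \|\tilde F_j\|_{2k}^2 \leq \binom{2k}{2}^{q-1}$, and combining with Chang's inequality gives $\|f\|_{2k}^2 \leq \binom{2k}{2}^q$. Taking the supremum over admissible weights yields the claim.

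There is no real obstacle: the decomposition at each step is engineered so that all pieces share the same $p$-adic valuation, which is exactly the hypothesis of Proposition~\ref{prop:chang}. The only subtlety is the normalization at each induction step, which is handled by the weights $W_j$ and the fact that $\sum_j W_j = 1$.
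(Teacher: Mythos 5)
Your proof is correct and follows essentially the same route as the paper: decompose $A$ into fibers according to the $p$-adic valuation of the first queried prime, apply Proposition~\ref{prop:chang}, and recurse on the fibers, whose query complexity has dropped by one. Your explicit induction on $q$ with the renormalization by $W_j$ is a slightly cleaner formalization of the iterative reduction the paper describes via the tree $T(\mc{A})$, but the underlying argument is identical.
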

\begin{proof}
 Let $\{ w_a \}_{a \in A}$ be arbitrary positive weights with $\sum_{a \in A} w^2_a = 1$. Our goal is to show that 
 $$
 \left\| \sum_{a \in A} w_a  e(ax) \right\|^2_{2k} \leq \binom{2k}{2}^{q(\mc{A})}.
 $$
The strategy is to iteratively apply Proposition~\ref{prop:chang} with a suitable choice of the prime factor $p$ until it is fully reduced to a sum of trivial terms $C_a\|w_a  e(a) \|^2_{2k}$ with some multiplicative factors $C_a$.  If then
$$
C_a \leq \binom{2k}{2}^{q(\mc{A})}
$$ for all $a$, we are done. Indeed, then one can simply write
 $$
 \left\|  \sum_{a \in A} w_a  e(ax) \right\|^2_{2k} \leq \binom{2k}{2}^{q(\mc{A})} \sum_a \left \|w_a  e(ax)  \right\|^2_{2k} = \binom{2k}{2}^{q(\mc{A})} .
 $$

It remains to show how to perform the reduction in such a way. Let $T(\mc{A})$ be the tree defined in Section~\ref{sec:proof_query_PFR} so that its depth is bounded by $q(\mc{A})$. Let $p_0$ be the prime assigned to the root of the tree. By the one-to-one correspondence between the branches of $T(\mc{A})$ and the subsets of $A$ we can decompose $A$ into a disjoint union
$$
A = \bigcup_j p^j_0 A_j
$$
so that the elements of $A_j$ are comprime with $p_0$. Then one can apply Proposition~\ref{prop:chang} with $p = p_0$ and 
$$
F_j(x) := \sum_{a \in A_j} w_{p^j_0a} e(p^j_0 ax).
$$
It follows that
$$
\left\| \sum_{a \in A} w_a  e(ax) \right\|^2_{2k} \leq \binom{2k}{2} \sum_j \left \| F_j \right\|^2_{2k}.
$$
Now we apply the same reduction for each term $\|F_j \|^2_{2k}$ and the subtree of $T(\mc{A}_j)$. However, by construction of $T(\mc{A})$, the depth of each subtree $T(\mc{A}_j)$ is now at most $q(\mc{A}) - 1$. Thus, by induction, it follows that the exponential sum will be fully reduced down to monomials with the multiplicative factors at most $\binom{2k}{2}^{q(\mc{A})}$.

\end{proof}


\subsection{Concluding the proof}

We will need the following standard estimate.

\begin{lemma} \label{lm:lambda_union}
Assume
$$
A = \bigcup_{i} A_i.
$$
Then
$$
\lambda_k(A) \leq \sum_i \lambda_k(A_i).
$$
\end{lemma}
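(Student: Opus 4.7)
The plan is to reduce to a disjoint partition and then combine the triangle inequality in $L^{2k}$ with Cauchy--Schwarz.

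First I would observe that $\lambda_k$ is monotone under inclusion: if $B \subset A$, then any admissible weight system on $B$ extends by zero to an admissible weight system on $A$ with the same exponential sum, so $\lambda_k(B) \leq \lambda_k(A)$. This lets me replace an arbitrary cover $A = \bigcup_i A_i$ by a disjoint partition $A = \bigsqcup_i A_i'$ with $A_i' \subset A_i$ (assigning each element of $A$ to one of the pieces it lies in), and the statement $\sum_i \lambda_k(A_i') \leq \sum_i \lambda_k(A_i)$ preserves the direction of the inequality. So I may assume the $A_i$ are disjoint.

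Next, fix an admissible weight system $\{c_n\}_{n\in A}$ realizing $\lambda_k(A)$, i.e.\ $c_n > 0$ and $\sum_{n\in A} c_n^2 = 1$. Set
$$
s_i := \Bigl(\sum_{n \in A_i} c_n^2\Bigr)^{1/2}, \qquad \sum_i s_i^2 = 1.
$$
For each $i$ with $s_i > 0$, the renormalized weights $c_n^{(i)} := c_n/s_i$ on $A_i$ are admissible for $\lambda_k(A_i)$, hence
$$
\Bigl\|\sum_{n \in A_i} c_n\, e(nx)\Bigr\|_{2k} = s_i \Bigl\|\sum_{n \in A_i} c_n^{(i)} e(nx)\Bigr\|_{2k} \leq s_i\, \lambda_k(A_i)^{1/2}.
$$

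Finally, the triangle inequality in $L^{2k}([0,1])$ combined with the Cauchy--Schwarz inequality in the $i$ variable gives
$$
\Bigl\|\sum_{n\in A} c_n\, e(nx)\Bigr\|_{2k} \leq \sum_i \Bigl\|\sum_{n\in A_i} c_n\, e(nx)\Bigr\|_{2k} \leq \sum_i s_i\, \lambda_k(A_i)^{1/2} \leq \Bigl(\sum_i s_i^2\Bigr)^{1/2}\Bigl(\sum_i \lambda_k(A_i)\Bigr)^{1/2}.
$$
Squaring and using $\sum_i s_i^2 = 1$ gives $\lambda_k(A) \leq \sum_i \lambda_k(A_i)$.

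There is no real obstacle here; the only thing to be careful about is that the cover need not be disjoint, which is handled by the initial monotonicity reduction, and that the $L^{2k}$ quasi-norm for $k\ge 1$ is an honest norm so the triangle inequality is available.
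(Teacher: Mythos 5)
Your proof is correct and follows essentially the same route as the paper's: reduce to a disjoint partition, apply the triangle inequality in $L^{2k}$, bound each piece by $s_i\,\lambda_k(A_i)^{1/2}$ after renormalizing, and finish with Cauchy--Schwarz. Your explicit justification of the disjointness reduction via monotonicity of $\lambda_k$ is a small extra detail the paper leaves implicit.
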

\begin{proof}
Without loss of generality we assume that $A_i$ are disjoint. Let $w_a, a \in A$ be positive weights with $\sum_a w^2_a = 1$. By the triangle inequality
$$
\|\sum_{a \in A} w_a e(ax) \|_{2k} \leq \sum_i \| \sum_{a \in A_i} w_a e(ax) \|_{2k} \leq \sum_i \lambda^{1/2}_k(A_i)\left(\sum_{a \in A_i} w^2_a \right)^{1/2} 
$$
The claim follows by applying Cauchy-Schwarz to the last inequality and squaring both sides.
\end{proof}

Now everything is set up for the proof of Theorem~\ref{thm:few_prods_many_sums_lambda}.

Let $b := \beta_*(A)$ and $\epsilon > 0$ be fixed for the rest of the proof.

\begin{Claim} \label{claim}
Assume $\beta_*(A) \leq b$. Then there is a subset $A' \subset A$ such that $|A'| \geq |A|/2$ and 
$$
\lambda_k(A') \leq 2b^{1/\epsilon}\binom{2k}{2}^{\epsilon \log_2 |A|}.
$$
\end{Claim}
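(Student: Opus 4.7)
The plan is to cover at least half of $A$ by a family of subsets, each of low coordinate query complexity under the prime valuation map $\Pi$, and then combine the $\lambda_k$-bounds for each piece via Lemma~\ref{lm:lambda_union}, using Lemma~\ref{lm:lambda_complexity_bound} to convert query complexity into $\lambda_k$ control. The covering is produced by iterating a $\beta_*$-sensitive sharpening of Theorem~\ref{thm:query_PFR}.

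The sharpening is obtained by inspecting Section~\ref{sec:proof_query_PFR}: the only point where the additive doubling hypothesis $|A+A|\le K|A|$ enters is to conclude $\beta(B)\le K^2$ via Lemma~\ref{lm:beta-pluneccke}, where $B\subset A$ is the quasicube subset extracted from the largest binary subtree of $T(\mc A)$. Under the present hypothesis $\beta_*(A)\le b$, one can bypass Pl\"unnecke entirely: since $B\subset A$ and $\beta_*$ is trivially monotone under subset inclusion (as $|BXY|\le |AXY|$ for every $X,Y$), we have $\beta_*(B)\le \beta_*(A)\le b$, while Theorem~\ref{thm:beta} (applied to $\Pi(B)$, which lies in a quasicube by construction) gives $\beta_*(B)=|B|$, so $|B|\le b$. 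Lemma~\ref{lm:tree_bound} then extracts a subset of $A$ with coordinate query complexity at most $\epsilon\log_2|A|$ and size at least $b^{-1/\epsilon}|A|$.

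With this tool in hand, set $R_0:=A$, and for $i\ge 1$ let $A_i\subset R_{i-1}$ be a subset produced by applying the sharpening to $R_{i-1}$; this is legitimate because $\beta_*(R_{i-1})\le\beta_*(A)\le b$ by monotonicity, and because $\log_2|R_{i-1}|\le\log_2|A|$ keeps the query complexity bound at $\epsilon\log_2|A|$. Put $R_i:=R_{i-1}\setminus A_i$, so $|R_i|\le (1-b^{-1/\epsilon})^{i}|A|\le e^{-i b^{-1/\epsilon}}|A|$. Take the smallest $t$ with $|R_t|\le|A|/2$, which satisfies $t\le \lceil b^{1/\epsilon}\ln 2\rceil\le 2b^{1/\epsilon}$. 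Then $A':=A_1\cup\cdots\cup A_t$ has $|A'|\ge|A|/2$, and Lemma~\ref{lm:lambda_complexity_bound} gives $\lambda_k(A_i)\le\binom{2k}{2}^{\epsilon\log_2|A|}$ for every $i$. Lemma~\ref{lm:lambda_union} now yields
$$
\lambda_k(A')\le \sum_{i=1}^{t}\lambda_k(A_i)\le t\binom{2k}{2}^{\epsilon\log_2|A|}\le 2b^{1/\epsilon}\binom{2k}{2}^{\epsilon\log_2|A|},
$$
as required.

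The main (and essentially only) substantive point to verify is that the proof of Theorem~\ref{thm:query_PFR} does in fact give the sharper $b^{-1/\epsilon}$ factor rather than $K^{-2/\epsilon}$ when a direct $\beta_*$-hypothesis is available; this is immediate from monotonicity of $\beta_*$, after which the iteration and union bounds are routine bookkeeping.
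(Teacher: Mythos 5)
Your proof is correct and follows essentially the same route as the paper's: bound the largest binary subtree of $T(\Pi(A))$ by $b$ via Theorem~\ref{thm:beta} and monotonicity of $\beta_*$, extract large low-query-complexity pieces with Lemma~\ref{lm:tree_bound} and Lemma~\ref{lm:lambda_complexity_bound}, iterate until half of $A$ is covered, and combine with Lemma~\ref{lm:lambda_union}. The only differences are cosmetic bookkeeping: you count the iterations via geometric decay, whereas the paper observes that each removed piece has size at least $b^{-1/\epsilon}|A|/2$ (and separately disposes of the trivial case $|A|\le 2b^{1/\epsilon}$, which your decay argument does not need).
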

\begin{proof}
Let $T := T(\mc{A})$ be the tree defined in Section~\ref{sec:proof_query_PFR}. By Theorem~\ref{thm:beta} the tree is free of binary subtrees of size $b$. If $|A|/2 \leq b^{1/\epsilon}$ then claim is trivially true for any $A' \subset A$ of size $|A|/2$, so assume the opposite.

By Lemma~\ref{lm:tree_bound}, there's a substree $T' \subset T$ of size at least
$b^{-1/\epsilon}|A|$ and depth at most $\epsilon \log_2 |A|$.  

Let $A'_0$ be the subset of $A$ corresponding to $T'$. We have $q(\mc{A'}) \leq  \epsilon \log_2 |A|$ and so, by Lemma~\ref{lm:lambda_complexity_bound}, 
$$
\lambda_k(A'_0) \leq \binom{2k}{2}^{\epsilon \log_2 |A|}.
$$

Let $A_0 := A \setminus A'_0$. If $|A_0| \leq |A|/2$, stop. If not, repeat the step above and find a set $A'_1 \subset A_0$ of size at least $b^{-1/\epsilon}|A|/2$ and the tree depth at most $\epsilon \log_2 |A|$. We have 
$$
\lambda_k(A'_1) \leq \binom{2k}{2}^{\epsilon \log_2 |A|}.
$$

Reiterating, define $i$ to be the first index such that $|A_i| \leq |A|/2$. Clearly $i \leq b^{1/\epsilon}$ since $|A'_j| \geq b^{-1/\epsilon}|A|/2$ for $0 \leq j < i$.
Put 
$$
A' := \bigcup^i_{j = 0} A'_j.
$$
and estimate 
$$
\lambda_k(A') \leq \sum_j \lambda_k(A'_j) \leq (b^{1/\epsilon}+1)\binom{2k}{2}^{\epsilon \log_2 |A|} \leq 2b^{1/\epsilon}\binom{2k}{2}^{\epsilon \log_2 |A|}.
$$
As $|A'| > |A|/2$ by the choice of $i$, the claim follows.
\end{proof}

Now everything is set up for the proof of Theorem~\ref{thm:few_prods_many_sums_lambda}.
\begin{proof}

Let $A'_0$ be the output of Claim~\ref{claim}. Put $A_0 := A \setminus A'_0$ and apply the claim again to  $A_0$. Note that the hypothesis of Claim~\ref{claim} still holds true as $\beta(A_0) \leq \beta_*(A) = b$. We obtain $A'_1$ such that 
$|A'_1| \geq |A'_0|/2$ and
$$
\lambda_k(A'_1) \leq 2b^{1/\epsilon} \binom{2k}{2}^{\epsilon \log_2 |A_0|} \leq 2b^{1/\epsilon} \binom{2k}{2}^{\epsilon \log_2 |A| - 1}.
$$
Then reiterate with $A_1 := A_0 \setminus A'_1$ to obtain a finite sequence of sets $A'_i$ and $A_i := A_{i-1} \setminus A'_i$ with
$$
\lambda_k(A'_i) \leq 2b^{1/\epsilon} \binom{2k}{2}^{\epsilon \log_2 |A| - i}.
$$
Applying Lemma~\ref{lm:lambda_union} to $\bigcup_i A'_i$ we can crudely estimate
$$
\lambda_k(A) \leq 2 b^{1/\epsilon} \binom{2k}{2}^{\epsilon \log_2 |A|} \sum^{\infty }_{i=0} 2^{-i} \leq 4 b^{1/\epsilon} \binom{2k}{2}^{\epsilon \log_2 |A|} \leq 10 b^\frac{1}{\epsilon} |A|^{2\epsilon \log_2 k}.
$$

\end{proof}

\subsection{Proof of Theorem~\ref{thm:few_prods_many_sums}}

Theorem~\ref{thm:few_prods_many_sums} is a straightforward corollary of Theorem~\ref{thm:few_prods_many_sums_lambda}. By Lemma~\ref{lm:beta-pluneccke}, $\beta_* \leq K_*^2$ and so
$$
|kA| \geq \frac{|A|^k}{\lambda^k_k(A)} \gg_k |A|^{k -  2 \epsilon k \log_2 k}K^{-2k/\epsilon}_*. 
$$




\section*{Acknowledgements}
DZ is supported by a Knut and Alice Wallenberg Fellowship (Program for Mathematics 2017). PD is supported by an MTA Lend\"ulet grant.
The authors thank M\'at\'e Matolcsi, Imre Ruzsa, George Shakan, Misha Rudnev, Oliver Roche-Newton, Ben Green and Ilya Shkredov for useful discussions and feedback on the early drafts. We also thank  the anonymous referees whose comments and suggestions has greatly improved the exposition.

\bibliographystyle{plain}
\bibliography{main}

\end{document}